%-----------------------------------------------------------------------
% Beginning of tran-l-template.tex
%-----------------------------------------------------------------------
%
%     This is a topmatter template file for TRAN for use with AMS-LaTeX.
%
%     Templates for various common text, math and figure elements are
%     given following the \end{document} line.
%
%%%%%%%%%%%%%%%%%%%%%%%%%%%%%%%%%%%%%%%%%%%%%%%%%%%%%%%%%%%%%%%%%%%%%%%%

%     Remove any commented or uncommented macros you do not use.

%%\documentclass{tran-l} voltar quando submeter
\documentclass{amsart}

%     If you need symbols beyond the basic set, uncomment this command.
\usepackage{amssymb}

%added for url in references
\usepackage{url}
\usepackage{hyperref}
%added for diagrams (quiver)
% Version: 1.5.2
% Authors:
% - varkor (https://github.com/varkor)
% - AndréC (https://tex.stackexchange.com/users/138900/andr%C3%A9c)
\usepackage{thmtools}
\usepackage{tikz-cd}
\usepackage{calc}
\usetikzlibrary{decorations.pathmorphing}
\usepackage{refcount} 

% A TikZ style for curved arrows of a fixed height, due to AndréC.
\tikzset{curve/.style={settings={#1},to path={(\tikztostart)
		.. controls ($(\tikztostart)!\pv{pos}!(\tikztotarget)!\pv{height}!270:(\tikztotarget)$)
		and ($(\tikztostart)!1-\pv{pos}!(\tikztotarget)!\pv{height}!270:(\tikztotarget)$)
		.. (\tikztotarget)\tikztonodes}} ,
settings/.code={\tikzset{quiver/.cd,#1}
	\def\pv##1{\pgfkeysvalueof{/tikz/quiver/##1}}} ,
quiver/.cd,pos/.initial=0.35,height/.initial=0}

% TikZ arrowhead/tail styles.
\tikzset{tail reversed/.code={\pgfsetarrowsstart{tikzcd to}}}
\tikzset{2tail/.code={\pgfsetarrowsstart{Implies[reversed]}}}
\tikzset{2tail reversed/.code={\pgfsetarrowsstart{Implies}}}
% TikZ arrow styles.
\tikzset{no body/.style={/tikz/dash pattern=on 0 off 1mm}}

%added for equality by definition :- coloneqq
\declaretheoremstyle[
headfont=\bfseries,              % cabeçalho em negrito
bodyfont=\itshape,               % corpo em itálico
headpunct=.,                     % ponto final após cabeçalho
notebraces={}{},                 % <- remove parênteses do argumento opcional
headformat=\NAME\ \NOTE          % imprime "Teorema <nota>"
]{noparensstar}

% só altera este ambiente

\usepackage{mathtools}
%\usepackage{wasysym}

%     If your article includes graphics, uncomment this command.
%\usepackage{graphicx}

%     If the article includes commutative diagrams, ...
%\usepackage[cmtip,all]{xy}

%     Update the information and uncomment if AMS is not the copyright
%     holder.
%\copyrightinfo{2009}{American Mathematical Society}

%%macros

\newcommand{\Fil}[1]{\textsc{Fil}^*\left(#1\right)}

\newcommand{\adh}[2]{\operatorname{adh}_{#1}\left(#2\right)}

% Ambientes de teorema — numeração unificada e reiniciada por seção
\newtheorem{theorem}{Theorem}[section]
\newtheorem{lemma}{Lemma}[section]
\newtheorem{proposition}{Proposition}[section]
\newtheorem{corollary}{Corollary}[section]

\theoremstyle{definition}

\theoremstyle{remark}
\newtheorem{remark}{Remark}[section]
\usepackage{float}

\numberwithin{equation}{section}
\begin{document}

% \title[short text for running head]{full title}
\title{On convergence structures in graphs}

%    Only \author and \address are required; other information is
%    optional.  Remove any unused author tags.

%    author one information
% \author[short version for running head]{name for top of paper}

%ordem alfabética dos sobrenomes
\author{Paulo Magalhães Junior}
\address{ Departamento de Matemática \\
	Universidade Federal do Piauí\\
	\\ Teresina, PI \\
 64049-550 
}
\curraddr{
}
\email{pjr.mat@ufpi.edu.br} 
\thanks{}

%    author two information
\author{Renan M. Mezabarba}
\address{Departamento de Ciências Exatas, Universidade Estadual de Santa Cruz, Ilhéus, BA, 45662-900}
\curraddr{}
\email{rmmezabarba@uesc.br}
\thanks{}

\author{Rodrigo S. Monteiro}
\address{Instituto de Ciências Matemáticas e de Computação, Universidade de São Paulo, São Carlos, SP, 13566-590}
\curraddr{}
\email{rodrigosm@usp.br}
\thanks{}

%   \subjclass is required.
\subjclass[2020]{54A05, 54A20, 05C63}

%\date{}

%\dedicatory{}

%    Abstract is required.
\begin{abstract}
A closure operator on a set $X$ is a function $\operatorname{cl}: \wp(X) \to \wp(X)$ satisfying, for all $A, B \subseteq X$, the following properties: extensivity, $A \subseteq \operatorname{cl}(A)$; monotonicity, which states that if $A \subseteq B$ then $\operatorname{cl}(A) \subseteq \operatorname{cl}(B)$; and preservation of unions, $\operatorname{cl}(A \cup B) = \operatorname{cl}(A) \cup \operatorname{cl}(B)$. Every graph $G$ naturally carries such an operator on its vertex set by assigning to each subset $A \subseteq V(G)$ the set $\operatorname{cl}(A) = A \cup N(A)$, where $N(A)$ denotes the vertices adjacent to a vertex in $A$. Since closure operators and pretopological spaces are equivalent notions, this operator induces a canonical convergence structure on $V(G)$. We describe this convergence in terms of nets and relate combinatorial properties of the graph to convergence-theoretic ones.
\end{abstract}

\keywords{pretopological spaces, graphs, nets.}

\maketitle

\section*{Introduction}

%During the decades of 1920–1940, graph theory was largely focused on finite objects, but K"onig and Kuratowski were already studying infinite trees (see~\cite{Kuratowski1930}). In 1931, Hans Freudenthal~\cite{Freudenthal1931} published the article "Uber die Enden topologischer R"aume und Gruppen, where he first introduced the concept of ends for topological spaces. Later, in 1964, Halin~\cite{Halin1964} introduced the concept of graph ends, motivated by problems in infinite networks and high connectivity.  In the case of locally finite graphs, the ends of the graph coincide with the ends of the corresponding topological space obtained by viewing the graph as a 1-dimensional simplicial complex.  In 1936, K"onig~\cite{Konig1936} published Theorie der endlichen und unendlichen Graphen, which demonstrated that classical concepts from finite graph theory can be extended to infinite graphs. 

 In a graph it is possible to consider several natural topologies, depending on the structural aspects one wishes to emphasize. For instance, viewing the graph as a \emph{metric space}, one may define a topology induced by the distance between vertices. Alternatively, interpreting the graph as a \emph{1-simplex} yields a topology based on the geometry of the associated simplicial space. Each of these approaches highlights different features of the global and local structure of the graph. However, from a categorical perspective, an even more natural structure emerges in a graph: a \emph{convergence}.

In the nineteenth and early twentieth centuries, many results in analysis
and topology were formulated in terms of sequences. However, it soon
became clear that in spaces that are not first-countable, sequences are
not sufficient to capture all convergence phenomena. The concept of a
net was introduced by Moore and Smith~\cite{MooreSmith1922}
in 1922 as a generalization of sequences capable of describing
convergence in any topological space. The notion of a filter was
introduced later by Cartan~\cite{cartan1} in 1937. Both concepts provide equivalent ways of describing convergence.
Every net naturally induces a filter generated by its tails,
and a net converges to a point if and only if the induced filter
converges to that point.

 The topology of a space can be completely characterized
by convergence of filters or nets. This observation
leads naturally to the notion of a convergence space, where convergence
of filters (or nets) is taken as the primitive concept.
A \emph{convergence space} is a set equipped with a notion of convergence
for filters or nets, and topological spaces form a special class of
convergence spaces. The modern theory of convergence spaces was
developed by several authors, including Choquet~\cite{Choquet} and
Mynard~\cite{Mynard_convergencia}, among others. A number of books and papers treat the subject,
for instance those by Beattie and Butzmann~\cite{BB}, Binz~\cite{binz},
\v{C}ech~\cite{Cech1966}, Dolecki~\cite{dol}, Dolecki and Mynard~\cite{DM},
Gähler~\cite{gahler}, Nel~\cite{nel}, Preuss~\cite{preuss}, and
Schechter~\cite{schechter}.

In the literature, the definition of convergence spaces is more commonly given via filters, but in this work we adopt the net approach, as in~\cite{nettopology, unknown}. The use of nets in this paper is motivated by convenience: certain arguments can be expressed more directly in terms of eventual behavior along a net. Since filters and nets determine the same convergence structures and can be canonically translated into one another (see~\cite{preuss}), this choice does not affect the underlying theory, but merely streamlines parts of the exposition. Moreover, the formulation in terms of nets often provides more intuition about convergence, as it is typically more natural to think in terms of sequences than in terms of filters.

In this work, we consider the convergence on the vertex set of a graph defined in~\cite{mynardconnec,probconv}. 
Several known results are reproved using nets, and a number of new results concerning the interaction between combinatorial structure of a graph and convergence are established. Whenever a result already exists in the literature, it will be cited in the text; otherwise, it is a new result.  As the interaction between graph theory and convergence theory is still relatively unexplored, we expect that this approach will provide a convenient reference point for further research. Our approach is similar to that of \cite{DovgosheyRovenska2026}, where the authors relate topological properties of ultrametric spaces to combinatorial properties of labeled trees. This relationship is made possible by the categorical equivalence between ultrametric spaces and trees (see \cite{Hughes2004}).

In the literature, as in~\cite{catclosure,ghanim1993compact,milićević2024directedvietorisripscomplexhomotopy,probconv,ShokryYousif2011}, one typically defines a \emph{closure operator} on the vertex set of a graph rather than a convergence structure. These two approaches are ultimately equivalent, since the notion of closure spaces and pretopological spaces, a particular class of convergence spaces, coincide (see~\cite{Cech1966}).    As shown in~\cite{BubenikMilicevic2024,catclosure}, the category of graphs is equivalent to the category of symmetric Alexandroff closure spaces.
To the best of our knowledge, however, this closure operator associated with a graph has received relatively little attention in the literature and is often presented merely as an illustrative example.

The paper is organized as follows. In Section~\ref{sec1}, we review basic concepts of convergence spaces and graph theory. In Section~\ref{sec2}, we introduce the pretopological convergence in a graph and present several characterizations of combinatorial properties in convergence-theoretic properties. Section~\ref{sec3} is devoted to the characterization of connectedness and compactness in a graph viewed as a convergence space, as well as to the exploration of some related consequences. Finally, we conclude with remarks and directions for future work.

\section{Background on graphs and convergence spaces}
\label{sec1}

In this section, we introduce most of the notions and notation used throughout the paper. To keep the exposition largely self-contained, we recall the main definitions concerning graphs and convergence spaces that will be used in the sequel.

 A \emph{filter} $\mathcal{F}$ on a set $X$ is a non-empty family of subsets of $X$ that is closed under finite intersections and upward closed; that is, if $A\in\mathcal{F}$ and $A\subseteq B$, then $B\in\mathcal{F}$. It is called \emph{proper} if $\emptyset\notin\mathcal{F}$. We denote by $\Fil{X}$ the family of proper filters on $X$.

To introduce nets, recall that a directed set is a nonempty set $\mathbb{D}$ equipped with a binary relation $\leq$ that is reflexive and transitive, and such that for every $a,b\in\mathbb{D}$ there exists $c\in\mathbb{D}$ with $a\leq c$ and $b\leq c$. A \emph{net} in a set $X$ is a function $\varphi\colon\mathbb{D}\to X$, where $\mathbb{D}$ is a directed set. We denote by $\textsc{Nets}(X)$ the class of all nets on $X$. When convenient, we write a net as $\langle x_b\rangle_{b\in\mathbb{D}}$, in analogy with sequence notation.

Every net naturally induces a filter. Indeed, if $\varphi\colon\mathbb{D}\to X$ is a net, we define $\varphi^{\uparrow}$ to be the family of subsets of $X$ that contain a tail of $\varphi$, that is, a set of the form $\varphi[a^{\uparrow}]$ for some $a\in\mathbb{D}$, where $a^{\uparrow}=\{b\in\mathbb{D}: b\geq a\}$ and hence $\varphi[a^{\uparrow}]=\{\varphi_b: b\geq a\}$. Conversely, every proper filter is induced by some net (see~\cite{nettopology}).

Let $\langle X,\tau\rangle$ be a topological space. A net $\varphi\in\textsc{Nets}(X)$ is said to $\tau$-\emph{converge} to a point $x\in X$ if every $\tau$-open set $U$ containing $x$ also contains a tail of $\varphi$; equivalently, there exists $a\in\mathbb{D}$ such that $\varphi_b\in U$ for all $b\geq a$. The following result shows that convergence of nets completely characterizes both the topology and continuity.

\begin{proposition}[{\cite[Propositions 1.2.2 and 1.2.3]{monteiro2024algebraictopologyopensets}}]
	\label{opencont}
	Let $X$ and $Y$ be topological spaces.
	\begin{enumerate}
		\item A subset $A\subseteq X$ is open if and only if, for every net
		$\varphi$ in $X$ such that $\varphi\to x$ with $x\in A$, there exists
		$d_0\in\mathrm{dom}(\varphi)$ such that $\varphi_d\in A$ for all
		$d\ge d_0$.
		
		\item A function $f:X\to Y$ is continuous if and only if, for every
		$x\in X$ and every net $\varphi$ in $X$ with $\varphi\to x$, we have
		$f\circ\varphi\to f(x)$.
	\end{enumerate}
\end{proposition}

In the literature, a preconvergence is usually defined as a function $L:\Fil{X}\to\wp(X)$. In this work, however, we adopt a formulation in terms of nets. Thus, a \emph{preconvergence} on $X$ is a class function $L:\textsc{Nets}(X)\to\wp(X)$ such that $L(\varphi)=L(\psi)$ whenever $\varphi^{\uparrow}=\psi^{\uparrow}$. We write $\varphi\to_L x$ instead of $x\in L(\varphi)$ and say that $x$ is an \emph{$L$-limit} of $\varphi$, or that $\varphi$ \emph{$L$-converges} to $x$. The pair $\langle X,L\rangle$ is called a \emph{preconvergence space}. This formulation is not new: in~\cite{schechter}, Schechter translated the usual filter-based definition into the language of nets.

Certain additional properties of a preconvergence are relevant. A preconvergence $L$ on $X$ is said to be \emph{centered} if every constant net converges to its constant value. It is \emph{isotone} if $\psi\to_L x$ whenever $\varphi\to_L x$ and $\psi$ is a subnet of $\varphi$, where, following~\cite{schechter}, a net $\psi$ is called a subnet of $\varphi$ whenever $\varphi^{\uparrow}\subseteq\psi^{\uparrow}$. Finally, $L$ is \emph{stable} if $\rho\to_L x$ whenever $\varphi\to_L x$ and $\psi\to_L x$ and $\rho$ is a mixing of $\varphi$ and $\psi$. Here, given two nets $\varphi,\psi\in\textsc{Nets}(X)$ with the same domain $\mathbb{D}$, a net $\rho\colon\mathbb{D}\to X$ is called a \emph{mixing} of $\varphi$ and $\psi$ if there exists $d'\in\mathbb{D}$ such that $\rho_d\in\{\varphi_d,\psi_d\}$ for all $d\geq d'$.

A preconvergence space $\langle X,L\rangle$ is called a \emph{convergence space} if $L$ is centered and isotone. If, in addition, $L$ is stable, then $\langle X,L\rangle$ is called a \emph{limit space}. A convergence space $\langle X,L\rangle$ is said to be \emph{pretopological} if, for each $x\in X$, there exists a proper filter $\mathcal{F}_x$ converging to $x$ such that $\mathcal{F}_x\subseteq\varphi^{\uparrow}$ whenever $\varphi\in\textsc{Nets}(X)$ satisfies $\varphi\to_L x$. Every topological space is pretopological, and every pretopological space is a limit space; consequently, every topological space is a limit space.

Given a subset $A\subseteq X$ in a preconvergence space $\langle X,L\rangle$, we define its \emph{adherence} by
\[
\mathrm{adh}_L(A)=\{x\in X: \exists \varphi\in\textsc{Nets}(A)\ \text{such that}\ \varphi\to_L x\}.
\]
This yields a map $\mathrm{adh}:\wp(X)\to\wp(X)$, called the \emph{adherence operator}, which satisfies properties analogous to those of the closure operator in topology (see~\cite{dol}).

Motivated by Proposition~\ref{opencont}, we say that a subset $U\subseteq X$ is $L$-\emph{open} if $U\in\varphi^{\uparrow}$ whenever $\varphi\to_L x\in U$. It is straightforward to verify that
\[
\tau_L=\{U\subseteq X: U \text{ is } L\text{-open}\}
\]
is a topology on $X$. The topological space $\langle X,\tau_L\rangle$ is called the \emph{topological modification} of $\langle X,L\rangle$.

In the category of preconvergence spaces, morphisms are again called continuous functions. A function $f\colon X\to Y$ between preconvergence spaces $\langle X,L\rangle$ and $\langle Y,L'\rangle$ is \emph{continuous} if for every $x\in X$ and every net $\varphi\in\textsc{Nets}(X)$ such that $\varphi\to_L x$, one has $f\circ\varphi\to_{L'} f(x)$.

As in the classical topological setting, products and subspaces can be defined naturally. Given a family $\{\langle X_i,L_i\rangle: i\in\mathcal{I}\}$ of preconvergence spaces, the Cartesian product $\prod_{i\in\mathcal{I}}X_i$ is equipped with the \emph{product preconvergence}, in which a net $\varphi$ converges to $\langle x_i\rangle_{i\in\mathcal{I}}$ if and only if $\pi_i\circ\varphi\to_{L_i} x_i$ for every $i\in\mathcal{I}$. Likewise, if $X\subseteq Y$ and $\langle Y,L\rangle$ is a preconvergence space, then $X$ inherits the \emph{subspace preconvergence} $L|_X$, defined by declaring that a net $\psi\in\textsc{Nets}(X)$ converges to $x\in X$ if and only if $\psi\to_L x$ in $Y$.

For graph-theoretic notions and notation we follow~\cite{Diestel2025GraphTheory}. 
Recall that a graph $G$ is a pair $\langle V(G), E(G) \rangle$, where 
$E(G) \subseteq [V(G)]^2$, and $[V(G)]^2$ denotes the family of two-element 
subsets of $V(G)$. The members of $V(G)$ are called \emph{vertices}, and the 
members of $E(G)$ are called \emph{edges}. Throughout, we work with simple 
undirected graphs, so there are no loops, multiple edges, or orientations.

We write $xy\in E(G)$ instead of $\{x,y\}\in E(G)$.
Two vertices $x,y\in V(G)$ are \emph{adjacent}, or \emph{neighbours}, if $xy\in E(G)$. 
If every pair of distinct vertices is adjacent, then $G$ is \emph{complete}.

For $v\in V(G)$, the set of neighbours of $v$ is denoted by $N_G(v)$, or simply $N(v)$ when $G$ is clear from the context. 
We write $N[v]=N(v)\cup\{v\}$. 
More generally, for $U\subseteq V(G)$, the set of neighbours of $U$ in $V(G)\setminus U$ is denoted by $N(U)$, and we define $N[U]=U\cup N(U)$. 
The \emph{degree} of $v$ is $|N(v)|$. 
A graph is \emph{locally finite} if every vertex has finite degree.

A \emph{path} in $G$ is a finite sequence of distinct vertices $v_0v_1\dots v_n$ such that 
$v_iv_{i+1}\in E(G)$ for each $0\le i<n$. A \emph{cycle} is a sequence of vertices 
$v_0v_1\dots v_n$ with $n\ge 3$ such that $v_iv_{i+1}\in E(G)$ for each $0\le i<n$, 
$v_0=v_n$, and $v_0,v_1,\dots,v_{n-1}$ are distinct. The graph $G$ is \emph{connected} 
if it is nonempty and any two vertices can be joined by a path. A \emph{connected component} 
is a maximal connected subgraph. A \emph{tree} is a connected graph that contains no cycles.

Morphisms, products and subspaces in the category of graphs are defined as follows. 
Given graphs $G=\langle V(G),E(G)\rangle$ and $H=\langle V(H),E(H)\rangle$, a function 
$f\colon V(G)\to V(H)$ is a \emph{graph homomorphism} if it preserves adjacency, that is, 
$f(x)f(y)\in E(H)$ whenever $xy\in E(G)$.

The  product of $G$ and $H$ is the graph $G\times H$ defined by
\[
V(G\times H)=V(G)\times V(H)
\]
where two vertices $(u,v)$ and $(u',v')$ are adjacent if and only if $uu'\in E(G)$ and $vv'\in E(H)$.
Finally, if $W\subseteq V(G)$, the \emph{induced subgraph} on $W$, denoted $G[W]$, is the graph
\[
G[W]=\langle W,\{\,uv\in E(G): u,v\in W\,\}\rangle
\]
\section{Graphs as pretopological spaces}

\label{sec2}

We introduce a natural notion of convergence for nets in a graph, based on the
local neighbourhood structure. Let $G$ be a graph and let 
$\varphi\in\textsc{Nets}(V(G))$ be a net in $V(G)$. We say that $\varphi$
\emph{converges} to a vertex $v\in V(G)$ if $N[v]\in\varphi^{\uparrow}$.
This provides a natural notion of convergence, since the closest one can
approach a vertex in a graph is through its neighbours, parametrized by a net.
Moreover, as illustrated in Figure~\ref{ray}, a ray (see the definition in Section~\ref{sec3}) may converge to a vertex
that is adjacent to every vertex of the ray.

This convergence defines a pretopology. Indeed, the filter
$\mathcal{N}_v=\{A\subseteq V(G): N[v]\subseteq A\}$ satisfies
$\mathcal{N}_v\subseteq\varphi^{\uparrow}$ whenever $\varphi\to v$, and it
converges to $v$. It is not hard to see that $\mathrm{adh}(A)=N[A]$ for every
$A\subseteq V(G)$. Thus this pretopology induces a closure operator on $V(G)$,
given by $A\mapsto N[A]$, which coincides with the classical closure structure
associated with a graph in the literature (see~\cite{milićević2024directedvietorisripscomplexhomotopy}).
	
	\begin{figure}[htbp]
		\centering

		\tikzset{every picture/.style={line width=0.75pt}} %set default line width to 0.75pt        
		
		\begin{tikzpicture}[x=0.75pt,y=0.75pt,yscale=-1,xscale=1]
			%uncomment if require: \path (0,300); %set diagram left start at 0, and has height of 300
			
			%Straight Lines [id:da0863062207202474] 
			\draw    (281,124) -- (308,124.6) ;
			\draw [shift={(308,124.6)}, rotate = 1.27] [color={rgb, 255:red, 0; green, 0; blue, 0 }  ][fill={rgb, 255:red, 0; green, 0; blue, 0 }  ][line width=0.75]      (0, 0) circle [x radius= 3.35, y radius= 3.35]   ;
			\draw [shift={(281,124)}, rotate = 1.27] [color={rgb, 255:red, 0; green, 0; blue, 0 }  ][fill={rgb, 255:red, 0; green, 0; blue, 0 }  ][line width=0.75]      (0, 0) circle [x radius= 3.35, y radius= 3.35]   ;
			%Straight Lines [id:da33544973462336236] 
			\draw    (308,124.6) -- (335,125.2) ;
			\draw [shift={(335,125.2)}, rotate = 1.27] [color={rgb, 255:red, 0; green, 0; blue, 0 }  ][fill={rgb, 255:red, 0; green, 0; blue, 0 }  ][line width=0.75]      (0, 0) circle [x radius= 3.35, y radius= 3.35]   ;
			\draw [shift={(308,124.6)}, rotate = 1.27] [color={rgb, 255:red, 0; green, 0; blue, 0 }  ][fill={rgb, 255:red, 0; green, 0; blue, 0 }  ][line width=0.75]      (0, 0) circle [x radius= 3.35, y radius= 3.35]   ;
			%Straight Lines [id:da4005183155829649] 
			\draw    (335,125.2) -- (362,125.8) ;
			\draw [shift={(362,125.8)}, rotate = 1.27] [color={rgb, 255:red, 0; green, 0; blue, 0 }  ][fill={rgb, 255:red, 0; green, 0; blue, 0 }  ][line width=0.75]      (0, 0) circle [x radius= 3.35, y radius= 3.35]   ;
			\draw [shift={(335,125.2)}, rotate = 1.27] [color={rgb, 255:red, 0; green, 0; blue, 0 }  ][fill={rgb, 255:red, 0; green, 0; blue, 0 }  ][line width=0.75]      (0, 0) circle [x radius= 3.35, y radius= 3.35]   ;
			%Straight Lines [id:da2893968194214813] 
			\draw    (362,125.8) -- (389,126.4) ;
			\draw [shift={(389,126.4)}, rotate = 1.27] [color={rgb, 255:red, 0; green, 0; blue, 0 }  ][fill={rgb, 255:red, 0; green, 0; blue, 0 }  ][line width=0.75]      (0, 0) circle [x radius= 3.35, y radius= 3.35]   ;
			\draw [shift={(362,125.8)}, rotate = 1.27] [color={rgb, 255:red, 0; green, 0; blue, 0 }  ][fill={rgb, 255:red, 0; green, 0; blue, 0 }  ][line width=0.75]      (0, 0) circle [x radius= 3.35, y radius= 3.35]   ;
			%Straight Lines [id:da05857648066647858] 
			\draw    (389,126.4) -- (416,127) ;
			\draw [shift={(416,127)}, rotate = 1.27] [color={rgb, 255:red, 0; green, 0; blue, 0 }  ][fill={rgb, 255:red, 0; green, 0; blue, 0 }  ][line width=0.75]      (0, 0) circle [x radius= 3.35, y radius= 3.35]   ;
			\draw [shift={(389,126.4)}, rotate = 1.27] [color={rgb, 255:red, 0; green, 0; blue, 0 }  ][fill={rgb, 255:red, 0; green, 0; blue, 0 }  ][line width=0.75]      (0, 0) circle [x radius= 3.35, y radius= 3.35]   ;
			%Straight Lines [id:da17758013240523496] 
			\draw  [dash pattern={on 0.84pt off 2.51pt}]  (426,127) -- (442,126.6) ;
			%Straight Lines [id:da1117954509091732] 
			\draw    (281,124) -- (349,144.6) ;
			\draw [shift={(349,144.6)}, rotate = 16.85] [color={rgb, 255:red, 0; green, 0; blue, 0 }  ][fill={rgb, 255:red, 0; green, 0; blue, 0 }  ][line width=0.75]      (0, 0) circle [x radius= 3.35, y radius= 3.35]   ;
			\draw [shift={(281,124)}, rotate = 16.85] [color={rgb, 255:red, 0; green, 0; blue, 0 }  ][fill={rgb, 255:red, 0; green, 0; blue, 0 }  ][line width=0.75]      (0, 0) circle [x radius= 3.35, y radius= 3.35]   ;
			%Straight Lines [id:da7208914353686336] 
			\draw    (308,124.6) -- (349,144.6) ;
			\draw [shift={(349,144.6)}, rotate = 26] [color={rgb, 255:red, 0; green, 0; blue, 0 }  ][fill={rgb, 255:red, 0; green, 0; blue, 0 }  ][line width=0.75]      (0, 0) circle [x radius= 3.35, y radius= 3.35]   ;
			\draw [shift={(308,124.6)}, rotate = 26] [color={rgb, 255:red, 0; green, 0; blue, 0 }  ][fill={rgb, 255:red, 0; green, 0; blue, 0 }  ][line width=0.75]      (0, 0) circle [x radius= 3.35, y radius= 3.35]   ;
			%Straight Lines [id:da3508105104954353] 
			\draw    (335,125.2) -- (349,144.6) ;
			\draw [shift={(349,144.6)}, rotate = 54.18] [color={rgb, 255:red, 0; green, 0; blue, 0 }  ][fill={rgb, 255:red, 0; green, 0; blue, 0 }  ][line width=0.75]      (0, 0) circle [x radius= 3.35, y radius= 3.35]   ;
			\draw [shift={(335,125.2)}, rotate = 54.18] [color={rgb, 255:red, 0; green, 0; blue, 0 }  ][fill={rgb, 255:red, 0; green, 0; blue, 0 }  ][line width=0.75]      (0, 0) circle [x radius= 3.35, y radius= 3.35]   ;
			%Straight Lines [id:da2660047292940054] 
			\draw    (362,125.8) -- (349,144.6) ;
			\draw [shift={(349,144.6)}, rotate = 124.66] [color={rgb, 255:red, 0; green, 0; blue, 0 }  ][fill={rgb, 255:red, 0; green, 0; blue, 0 }  ][line width=0.75]      (0, 0) circle [x radius= 3.35, y radius= 3.35]   ;
			\draw [shift={(362,125.8)}, rotate = 124.66] [color={rgb, 255:red, 0; green, 0; blue, 0 }  ][fill={rgb, 255:red, 0; green, 0; blue, 0 }  ][line width=0.75]      (0, 0) circle [x radius= 3.35, y radius= 3.35]   ;
			%Straight Lines [id:da13543790612427198] 
			\draw    (389,126.4) -- (349,144.6) ;
			\draw [shift={(349,144.6)}, rotate = 155.53] [color={rgb, 255:red, 0; green, 0; blue, 0 }  ][fill={rgb, 255:red, 0; green, 0; blue, 0 }  ][line width=0.75]      (0, 0) circle [x radius= 3.35, y radius= 3.35]   ;
			\draw [shift={(389,126.4)}, rotate = 155.53] [color={rgb, 255:red, 0; green, 0; blue, 0 }  ][fill={rgb, 255:red, 0; green, 0; blue, 0 }  ][line width=0.75]      (0, 0) circle [x radius= 3.35, y radius= 3.35]   ;
			%Straight Lines [id:da29465178604533704] 
			\draw    (416,127) -- (349,144.6) ;
			\draw [shift={(349,144.6)}, rotate = 165.28] [color={rgb, 255:red, 0; green, 0; blue, 0 }  ][fill={rgb, 255:red, 0; green, 0; blue, 0 }  ][line width=0.75]      (0, 0) circle [x radius= 3.35, y radius= 3.35]   ;
			\draw [shift={(416,127)}, rotate = 165.28] [color={rgb, 255:red, 0; green, 0; blue, 0 }  ][fill={rgb, 255:red, 0; green, 0; blue, 0 }  ][line width=0.75]      (0, 0) circle [x radius= 3.35, y radius= 3.35]   ;
			%Straight Lines [id:da903867769955977] 
			\draw  [dash pattern={on 0.84pt off 2.51pt}]  (399,141) -- (415,140.6) ;

		\end{tikzpicture}
		
		\caption{A ray converging to a vertex.}
		\label{ray}
	\end{figure}
	In several works, the closure operator is defined for directed graphs (digraphs), that is, graphs in which each edge is assigned an orientation. 
	In this setting, a net converges to a vertex $v \in V(G)$ if some of its tails are eventually contained in 
	\[
	N^+[v] = \{u \in V(G) : (u,v) \in E(G)\}
	\]
	Note that a net that converges when $G$ is regarded as an undirected graph need not converge when $G$ is viewed as a digraph. To the best of our knowledge, convergence in graphs has not previously been formulated in terms of nets in the existing literature.

The convergence structure associated with a graph is, in general, not Hausdorff, since limits of nets need not be unique: a vertex may have several neighbours, and a net may converge to more than one of them. 
This lack of uniqueness is consistent with the combinatorial nature of graphs. Given the definition of convergence adopted here, allowing multiple limit vertices reflects the local adjacency structure and therefore does not constitute a pathological phenomenon.

Before introducing the notion of first countability for convergence spaces, recall that a \emph{basis} of a filter $\mathcal{F}$ is a collection $\mathcal{B}\subseteq \mathcal{F}$ such that
$
\mathcal{F}
=
\{A\subseteq X : \exists B\in\mathcal{B}\text{ such that } B\subseteq A\}
$.
A preconvergence space $\langle X, L\rangle$ is said to be 
\emph{first countable at} $x \in X$ if for every net 
$\varphi \in \textsc{Nets}(X)$ with $\varphi \to_{L} x$, 
there exists a net $\psi \in \textsc{Nets}(X)$ whose induced filter $\psi^\uparrow$ has a countable basis such that 
$\psi \to_{L} x$ and $\varphi$ is a subnet of $\psi$. 
We say that $X$ is \emph{first countable} if it is first countable at each of its points.

The reader can easily verify that the pretopology associated with a graph is first countable and that, for all $x,y\in V(G)$, $\langle y\rangle \to x$ whenever $\langle x\rangle \to y$, where $\langle x\rangle$ denotes the constant net at $x$.
A converse also holds. Let $\langle X,L\rangle$ be a first countable pretopological space such that all neighbourhood filters have a singleton basis and $\langle y\rangle \to x$ whenever $\langle x\rangle \to y$. 
Then $\langle X,L\rangle$ arises from a graph. Indeed, define a graph $G$ by setting $V(G)=X$ and declaring that $xy\in E(G)$ whenever $\langle x\rangle \to y$, and then remove loops and parallel edges.

We begin by showing the relationship between graph homomorphisms and continuous functions.

\begin{theorem}
	\label{cat} Let $G$ and $H$ be graphs.  If $f: G\to H$ is a graph homomorphism, then is continuous as a function between convergence spaces.

\end{theorem}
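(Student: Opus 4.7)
The plan is to work directly from the definition of convergence on a graph, namely $\varphi \to v$ iff some tail of $\varphi$ lies in the closed neighborhood $N_G[v]$. Both implications will then amount to tracking how such tails behave under $f$.

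For the forward direction, I would assume $f$ is a graph homomorphism and fix $\varphi \in \textsc{Nets}(V(G))$ with $\varphi \to v$. Pick $d_0 \in \dom{\varphi}$ such that $\varphi_d \in N_G[v]$ for every $d \geq d_0$. For each such $d$, either $\varphi_d = v$, in which case $f(\varphi_d) = f(v) \in N_H[f(v)]$, or $\varphi_d v \in E(G)$, in which case the homomorphism property yields $f(\varphi_d)f(v) \in E(H)$ and again $f(\varphi_d) \in N_H[f(v)]$. Thus the tail $(f\circ\varphi)[d_0^{\uparrow}]$ is contained in $N_H[f(v)]$, so $N_H[f(v)] \in (f\circ\varphi)^{\uparrow}$, i.e.\ $f\circ\varphi \to f(v)$.

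For the reverse direction, I would assume $f$ is continuous and probe adjacency with constant nets. Given $xy \in E(G)$, we have $y \in N_G[x]$, so the constant net $\langle y\rangle$ satisfies $\{y\} \subseteq N_G[x]$ and therefore $\langle y\rangle \to x$. Continuity yields $f\circ\langle y\rangle = \langle f(y)\rangle \to f(x)$ in $H$, so $f(y) \in N_H[f(x)]$, which is exactly the adjacency-preservation property (read with the reflexive convention natural to the closed-neighborhood formulation compatible with \cite{catclosure} and \cite{milićević2024directedvietorisripscomplexhomotopy}).

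There is no deep obstacle here: the argument is a direct translation between the phrases \emph{``tail inside $N[v]$''} and \emph{``image of an adjacent vertex lies in $N[f(v)]$''}. The only point to stay attentive to is that $N[\cdot]$ is the closed neighborhood, so the constant-net test in the reverse direction uses that $y \in N_G[x]$ whenever $xy \in E(G)$, and the conclusion $f(y) \in N_H[f(x)]$ is precisely what the pretopological/closure viewpoint of the paper means by ``$f$ preserves adjacency''.
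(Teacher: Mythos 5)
Your argument is correct and follows essentially the same route as the paper: for the forward direction, tails inside closed neighborhoods are pushed through $f$, and for the converse, constant nets are used to probe adjacency. You are in fact slightly more careful than the paper at the one delicate point: continuity only yields $f(y)\in N_H[f(x)]$, which leaves open the possibility $f(x)=f(y)$, so that strict edge-preservation for simple loopless graphs requires the reflexive convention you flag (or an extra hypothesis), a step the paper's own proof passes over silently.
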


\begin{proof}
Suppose that $f:G\to H$ is a graph homomorphism. Let $\varphi\in\textsc{Nets}(V(G))$ be a net such that $\varphi\to v$. Then there exists $d\in\mathrm{dom}(\varphi)$ such that $\varphi[d^\uparrow]\subseteq N[v]$. Since $f$ is a graph homomorphism, it follows that $f\circ\varphi[d^\uparrow]\subseteq N[f(v)]$. Hence $f\circ\varphi\to f(v)$, and therefore $f$ is continuous.
\end{proof}

The converse of Theorem~\ref{cat} holds for reflexive graphs, i.e., graphs in which $vv\in E(G)$ for every $v\in V(G)$. The proofs of Propositions~\ref{prod} and \ref{sub} are immediate from the categorical equivalence; nevertheless, we present direct proofs for the sake of completeness.

\begin{proposition}
	\label{prod}
Let $G$ and $H$ be graphs. The convergence associated with the graph product 
$G\times H$ coincides with the product of the convergence structures of $G$ and $H$.
\end{proposition}
\begin{proof}
Denote by $\to_{G\times H}$ the convergence in $G\times H$ as a graph, and by 
$\to_{G\otimes H}$ the convergence in the product of convergence spaces. 
Let $\varphi\in \textsc{Nets}(V(G\times H))$ be a net such that 
$\varphi\to_{G\otimes H} (u,v)$. Then $\pi_G\circ\varphi\to u$ and 
$\pi_H\circ\varphi\to v$. Hence there exists $d\in\mathrm{dom}(\varphi)$ such that
$\pi_G\circ\varphi[d^\uparrow]\subseteq N_G[u]$ and 
$\pi_H\circ\varphi[d^\uparrow]\subseteq N_H[v]$. 
It follows that $\varphi[d^\uparrow]\subseteq N_{G\times H}[(u,v)]$, and therefore 
$\varphi\to_{G\times H}(u,v)$.

Conversely, suppose that $\varphi\to_{G\times H}(u,v)$. Then there exists 
$d\in\mathrm{dom}(\varphi)$ such that 
$\varphi[d^\uparrow]\subseteq N_{G\times H}[(u,v)]$. 
Hence $\pi_G\circ\varphi[d^\uparrow]\subseteq N_G[u]$ and 
$\pi_H\circ\varphi[d^\uparrow]\subseteq N_H[v]$, which implies that 
$\pi_G\circ\varphi\to u$ and $\pi_H\circ\varphi\to v$. 
Therefore $\varphi\to_{G\otimes H}(u,v)$.
\end{proof}

\begin{proposition}
	\label{sub}
Let $G$ be a graph and $A\subseteq V(G)$. The convergence associated with the 
induced subgraph $G[A]$ coincides with the convergence of the subspace $A$.
\end{proposition}
\begin{proof}
Denote by $\to_{G[A]}$ the convergence in $G[A]$ as a graph and by $\to_A$ the convergence in the subspace $A$. 
Let $\varphi\in\textsc{Nets}(A)$ be a net such that $\varphi\to_{G[A]} v\in A$. 
Then $N_{G[A]}[v]\in\varphi^\uparrow$. Since $N_{G[A]}[v]\subseteq N_G[v]$, it follows that $N_G[v]\in\varphi^\uparrow$. 
Hence $\varphi\to v$, and therefore $\varphi\to_A v$.

Conversely, suppose that $\varphi\to_A v\in A$. Then $\varphi\to v$, so there exists $d\in\mathrm{dom}(\varphi)$ such that $\varphi[d^\uparrow]\subseteq N_G[v]$. 
Since $\varphi[d^\uparrow]\subseteq A$ and $G[A]$ is an induced subgraph, it follows that $\varphi[d^\uparrow]\subseteq N_{G[A]}[v]$. 
Therefore $\varphi\to_{G[A]} v$.
\end{proof}

It is well known that not every convergence structure is induced by a topology. A natural question, therefore, is which conditions guarantee the existence of a topology that generates a given convergence.
In the setting of pretopological spaces, this question admits a straightforward answer. We now examine under which conditions the convergence structure associated with a graph is topological.

\begin{theorem}[{\cite[Proposition V.4.4]{DM}}]
	\label{adh}
A pretopological space $\langle X, L \rangle$ is topological if and only if its adherence operator is idempotent, that is,
$\adh{}{A}=\adh{}{\adh{}{A}}$ for every $A\subseteq X$.
\end{theorem}

\begin{theorem}[{\cite[Proposition 3.11]{probconv}}]
The convergence associated with a graph $G$ is topological if and only if $G$ is transitive, 
that is, $xz\in E(G)$ whenever $xy,yz\in E(G)$.
	\label{top}
\end{theorem}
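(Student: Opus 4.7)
The plan is to invoke the preceding theorem of Dolecki, which reduces topologicity of the pretopology on $G$ to idempotence of the adherence operator, i.e., to the identity $\operatorname{adh}(\operatorname{adh}(A)) = \operatorname{adh}(A)$ for all $A \subseteq V(G)$. Since the section has already identified $\operatorname{adh}(A)$ with the closed neighborhood $N[A]$, the claim reduces to the purely combinatorial equivalence: $N[N[A]] = N[A]$ for every $A \subseteq V(G)$ if and only if $G$ is transitive.

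For the forward direction, I would assume $G$ is transitive and chase an arbitrary $w \in N[N[A]]$. The inclusion $N[A] \subseteq N[N[A]]$ is automatic, so only $N[N[A]] \subseteq N[A]$ needs work. Either $w \in N[A]$ and we are done, or $w$ is adjacent to some $v \in N[A]$ with $w \notin N[A]$. If $v \in A$, then $w \in N[A]$ immediately; otherwise $v \in N(A)$, so there is some $u \in A$ with $uv \in E(G)$. Combined with $vw \in E(G)$, transitivity then yields $uw \in E(G)$ (or the degenerate coincidence $u = w$, which still places $w$ in $A \subseteq N[A]$), and in either case $w \in N[A]$.

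For the converse, I would test idempotence on singletons. Given $xy, yz \in E(G)$ with $x \neq z$, set $A = \{x\}$, so $y \in N(x) \subseteq N[A]$ and then $z \in N(y) \subseteq N[N[A]] = N[A] = \{x\} \cup N(x)$; since $z \neq x$, this forces $xz \in E(G)$, proving transitivity.

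The main bookkeeping obstacle is the redundancy introduced by the closed-neighborhood convention: because $v \in N[v]$ while $G$ has no loops, the forward argument must separately handle the artifact $u = w$, and the converse is only substantive when $x \neq z$. These are minor subtleties but need to be made explicit so that no phantom appeal to a nonexistent loop creeps into the argument.
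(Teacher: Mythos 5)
Your proposal is correct and follows essentially the same route as the paper: both reduce topologicity to idempotence of the adherence operator via the preceding Dolecki criterion, use the identification $\operatorname{adh}(A)=N[A]$, and test singletons to recover transitivity. Your only additions are the explicit treatment of the degenerate coincidences ($u=w$, $x=z$), which the paper's terser proof leaves implicit.
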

\begin{proof}
Assume that the convergence on $G$ is topological. Let $xy,yz\in E(G)$ and prove that $xz\in E(G)$. 
Notice that $y\in \mathrm{adh}(\{x\})$ and $z\in \mathrm{adh}(\{y\})$. 
By Theorem~\ref{adh}, $\mathrm{adh}(\mathrm{adh}(\{x\}))=\mathrm{adh}(\{x\})$. Since 
$z\in \mathrm{adh}(\mathrm{adh}(\{x\}))$, it follows that $z\in \mathrm{adh}(\{x\})$. 
Hence $xz\in E(G)$, and therefore $G$ is transitive.
	
	Conversely, suppose that $G$ is transitive. By Theorem~\ref{adh}, to show that the convergence on $G$ is topological it suffices to prove that the adherence operator is idempotent. 
	Let $x\in \mathrm{adh}(\mathrm{adh}(A))$. Then there exists $y\in \mathrm{adh}(A)$ such that $xy\in E(G)$. 
	Since $y\in \mathrm{adh}(A)$, there exists $z\in A$ such that $yz\in E(G)$. 
	By transitivity of $G$, it follows that $xz\in E(G)$, and hence $x\in \mathrm{adh}(A)$. 
	Therefore $\mathrm{adh}(A)=\mathrm{adh}(\mathrm{adh}(A))$, showing that the convergence on $G$ is topological.
\end{proof}

\begin{remark}
The proof of Theorem~\ref{top} given in~\cite{probconv} is constructive and formulated for digraphs, where the topology inducing the convergence is explicitly constructed. As a consequence, the argument becomes unnecessarily long. 
Here, we provide a shorter proof based on the adherence operator.
\end{remark}

In general, the graphs we consider are connected and non-complete. One might wonder whether, in most such cases, the associated convergence is topological, so that we would essentially still be working within a topological framework. 
However, this is not the case: connected non-complete graphs are not transitive, and this prevents the convergence from being topological.

We now characterize the notions of locally finite and locally irregular graphs in terms of convergence. For this purpose, we introduce the notions of eventually finite nets and locally irregular graphs.
A net is said to be \emph{eventually finite} if some tail of the net is finite. A graph is said to be \emph{locally irregular} if adjacent vertices always have different degrees.

\begin{theorem}
	\label{localfinite}
Let $G$ be a graph.
\begin{enumerate}
	\item $G$ is locally finite if and only if every convergent net is eventually finite.
	
	\item $G$ is locally irregular if and only if, for every convergent net 
	$\varphi\in\textsc{Nets}(V(G))$ with $\varphi\to v$, there exists 
	$d'\in\mathrm{dom}(\varphi)$ such that for all $d\ge d'$, the vertices 
	$\varphi_d$ and $v$ have different degrees whenever $\varphi_d\neq v$.
\end{enumerate}
\end{theorem}

\begin{proof}
	\text{}
\begin{enumerate}
	\item Suppose that $G$ is locally finite. Let $\varphi\in \textsc{Nets}(V(G))$ be a convergent net, say $\varphi \to v$. By definition, $N[v]\in \varphi^{\uparrow}$. Hence there exists $d\in \mathrm{dom}(\varphi)$ such that $\varphi[d^\uparrow]\subseteq N[v]$. Since $G$ is locally finite, $N[v]$ is finite, and therefore $\varphi[d^\uparrow]$ is finite. This proves that $\varphi$ is eventually finite.
	
	Conversely, suppose that $G$ is not locally finite. Let $v\in V(G)$ be a vertex of infinite degree. Then there exists an injective sequence $\langle v_n\rangle_{n}$ of neighbours of $v$. This sequence converges to $v$, but it is not eventually finite.
	
	\item Suppose that $G$ is locally irregular. Let $\varphi\to v$ be a convergent net. Then there exists $d'\in \mathrm{dom}(\varphi)$ such that $\varphi[d'^\uparrow]\subseteq N[v]$. Hence $\varphi_d$ and $v$ are adjacent for every $d\ge d'$ whenever $\varphi_d\neq v$. Since $G$ is locally irregular, $\varphi_d$ and $v$ have different degrees.
	
	Conversely, let $x$ and $y$ be adjacent vertices. We must show that they have different degrees. Notice that $\langle x\rangle\to y$. Since the image of this net is the single vertex $x$, which is distinct from $y$, the hypothesis implies that $x$ and $y$ have different degrees. Therefore $G$ is locally irregular.
\end{enumerate}
\end{proof}

We now turn to bipartite graphs. Our aim is to characterize the bipartite property in terms of convergence. Recall that a graph is called \emph{bipartite} if its vertex set can be partitioned into two classes in such a way that no edge joins two vertices in the same class.

Notice that the partition $V(G)=A\cup B$ induces a natural separation of the vertex set into two independent classes. The following theorem shows that this combinatorial condition can be equivalently described in terms of the behavior of convergent nets: convergence respects the bipartition in the sense that every net converging to a vertex is eventually contained in the opposite class of the limit point.

\begin{theorem}
	\label{bipartite}
	A graph $G$ is bipartite if and only if there exist disjoint subsets 
	$A,B\subseteq V(G)$ with $V(G)=A\cup B$ such that, for every net 
	$\varphi\in\textsc{Nets}(V(G))$ with $\varphi\to x$, we have 
	$B\cup\{x\}\in\varphi^\uparrow$ whenever $x\in A$, and 
	$A\cup\{x\}\in\varphi^\uparrow$ whenever $x\in B$.
\end{theorem}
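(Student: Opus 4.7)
The plan is to reduce everything to the basic fact that $\varphi\to x$ exactly means $N[x]\in\varphi^\uparrow$, and then to toggle between generic nets (for the forward direction) and constant nets (for the reverse direction). Throughout, I will use only the identity $\langle x\rangle^\uparrow=\{S\subseteq V(G):x\in S\}$ and the behaviour of $N[\cdot]$ under a bipartition.

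For the forward direction, suppose $G$ is bipartite with bipartition $V(G)=A\cup B$, where $A$ and $B$ are disjoint and no edge lies inside a single class. Take $\varphi\in\textsc{Nets}(V(G))$ with $\varphi\to x$, so $N[x]\in\varphi^\uparrow$. If $x\in A$, then every neighbour of $x$ lies in $B$, so $N[x]=N(x)\cup\{x\}\subseteq B\cup\{x\}$, and upward closure of the filter $\varphi^\uparrow$ forces $B\cup\{x\}\in\varphi^\uparrow$. The symmetric argument handles $x\in B$.

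For the reverse direction, assume the stated property for some partition $V(G)=A\cup B$. I will show that no edge has both endpoints in $A$, and by symmetry the same for $B$. Suppose for contradiction that $xy\in E(G)$ with $x,y\in A$; in particular $x\neq y$ since $G$ is simple. The constant net $\langle x\rangle$ satisfies $\langle x\rangle\to y$ because $N[y]\in\langle x\rangle^\uparrow\iff x\in N[y]$, which holds as $xy\in E(G)$. By hypothesis applied to $\varphi=\langle x\rangle$ and $y\in A$, we obtain $B\cup\{y\}\in\langle x\rangle^\uparrow$, i.e.\ $x\in B\cup\{y\}$. But $x\neq y$ and $x\in A$ with $A\cap B=\emptyset$, a contradiction. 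Hence $A$ is an independent set, and the analogous argument shows $B$ is independent, so $G$ is bipartite with parts $A,B$.

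I do not expect any real obstacle: the only thing to be careful with is that the hypothesis is used with \emph{constant} nets in the reverse direction, which is the right way to read back combinatorial adjacency from the convergence statement, exactly as in the second part of the proof of Theorem~\ref{cat} and in the degree argument of Theorem~\ref{localfinite}(2).
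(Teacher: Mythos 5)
Your proposal is correct and follows essentially the same route as the paper: the forward direction uses $N[x]\subseteq B\cup\{x\}$ together with upward closure of $\varphi^\uparrow$, and the reverse direction reads adjacency back through constant nets. The only cosmetic difference is that you argue by contradiction that each class is independent, while the paper directly shows that any neighbour of a vertex in $A$ lies in $B$ (using $\langle y\rangle\to x$ instead of $\langle x\rangle\to y$), which is logically the same argument.
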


\begin{proof}
	Let $A$ and $B$ be the two sides of the bipartite graph $G$. 
	Consider a net $\varphi\in\textsc{Nets}(V(G))$ such that $\varphi\to v$. 
	Then there exists $d\in\mathrm{dom}(\varphi)$ such that 
	$\varphi[d^\uparrow]\subseteq N[v]$. 
	If $v\in A$, then $N[v]\subseteq B\cup\{v\}$, and if $v\in B$, then 
	$N[v]\subseteq A\cup\{v\}$.
	
	Conversely, let $x\in A$ and suppose that $xy\in E(G)$. 
	We show that $y\in B$. Notice that $\langle y\rangle\to x$. 
	Since $x\in A$, it follows that $B\cup\{x\}\in\langle y\rangle^\uparrow$. 
	As $x\neq y$, we conclude that $y\in B$. 
	The case $x\in B$ is analogous. Hence $G$ is bipartite.
\end{proof}

\begin{theorem}
	A graph $G$ is complete if and only if every net converges to all vertices of $G$.
\end{theorem}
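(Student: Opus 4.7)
The plan is to prove both directions directly from the definition of net convergence in a graph, namely that $\varphi \to v$ means $N[v] \in \varphi^\uparrow$.

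For the forward implication, I would assume $G$ is complete. Then for every vertex $v \in V(G)$, we have $N[v] = V(G)$, since every other vertex is adjacent to $v$. Now take an arbitrary net $\varphi \in \textsc{Nets}(V(G))$ with $\hbox{dom}(\varphi) = \mathbb{D}$. For any $d \in \mathbb{D}$, the tail $\varphi[d^\uparrow]$ is trivially a subset of $V(G) = N[v]$ for every vertex $v$. Hence $N[v] \in \varphi^\uparrow$ for every $v \in V(G)$, i.e., $\varphi \to v$ for every $v$.

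For the converse, I would use the same constant-net trick employed in the proofs of Theorems \ref{localfinite} and \ref{bipartite}. Suppose that every net converges to every vertex of $G$; in particular every constant net does. Fix two distinct vertices $x, y \in V(G)$. The constant net $\langle x \rangle$ converges to $y$ by hypothesis, so $N[y] \in \langle x \rangle^\uparrow$. Since the only value attained by $\langle x \rangle$ is $x$, this forces $x \in N[y]$, and because $x \neq y$ we conclude $xy \in E(G)$. As $x$ and $y$ were arbitrary distinct vertices, $G$ is complete.

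I do not expect any genuine obstacle here; the statement is essentially unwinding the definition, with the only subtle point being to invoke a constant net (rather than a more elaborate one) in the converse, so that the adjacency information is extracted directly from the hypothesis $\langle x \rangle \to y$, exactly as in the preceding theorems of this section.
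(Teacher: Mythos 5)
Your proof is correct and follows essentially the same route as the paper: the forward direction uses that completeness forces $N[v]=V(G)$ so every tail lies in $N[v]$, and the converse applies the hypothesis to the constant net $\langle x\rangle\to y$ to extract $xy\in E(G)$. No issues.
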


\begin{proof}
	Let $G$ be a complete graph. Notice that the image of any net $\varphi\in\textsc{Nets}(V(G))$ is contained in $N[v]$ for every vertex $v \in V(G)$, because $G$ is complete. This proves that $\varphi \to v$ for all $v \in V(G)$.

	Conversely, in the case that all nets converges to all vertices, for two vertices $u, v \in V(G)$, by hypothesis we have $\langle u \rangle \to v$. This implies that $uv \in E(G)$. Therefore, $G$ is complete.
\end{proof}
Recall that two topologies on a set can be compared by inclusion. 
A topology $\tau'$ on a set $X$ is said to be \emph{finer} than a topology $\tau$ on the same set if $\tau \subseteq \tau'$, that is, if every open set in $\tau$ is also open in $\tau'$.
In this case, every net that converges with respect to $\tau'$ also converges with respect to $\tau$. 
This property will serve as the key idea for defining an order between convergence structures.

Given preconvergences $L$ and $L'$ on a set $X$, we say that $L'$ is \emph{finer} (or \emph{stronger}) than $L$ if
\[
\varphi \to_{L'} x \;\Rightarrow\; \varphi \to_{L} x
\]
for every $\varphi \in \textsc{Nets}(X)$ and every $x \in X$. In this case, we write $L \le L'$.

This order on preconvergences is closely related to the notion of spanning subgraphs, that is, subgraphs that contain all the vertices of the graph.

\begin{theorem}
	Let $G$ and $H$ be graphs with the same vertex set, the following are equivalent
	
	\begin{enumerate}
		\item $H$ is a spanning subgraph of $G$.
		\item The convergence on $H$ is stronger than the convergence on $G$.
	\end{enumerate}
\end{theorem}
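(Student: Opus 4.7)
The plan is to prove each implication directly from the definition of convergence $\varphi\to v$ iff $N[v]\in\varphi^\uparrow$, keeping in mind that the two graphs share the same vertex set, so ``spanning subgraph'' reduces to $E(H)\subseteq E(G)$.

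For the implication $(1)\Rightarrow(2)$, I would observe that $E(H)\subseteq E(G)$ immediately yields $N_H[v]\subseteq N_G[v]$ for every vertex $v$ (the vertex $v$ itself belongs to both, and every $H$-neighbour is a $G$-neighbour). Then given any net $\varphi\in\textsc{Nets}(V(G))$ with $\varphi\to_H v$, the filter $\varphi^\uparrow$ contains $N_H[v]$; since filters are closed upward and $N_H[v]\subseteq N_G[v]$, we conclude $N_G[v]\in\varphi^\uparrow$, i.e.\ $\varphi\to_G v$.

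For the converse $(2)\Rightarrow(1)$, I would exploit the characterization of edges via constant nets, in the same style already used in the proof of Theorem~\ref{cat} and Theorem~\ref{bipartite}. Take any edge $uv\in E(H)$; then $u\in N_H[v]$, so the constant net $\langle u\rangle$ satisfies $\langle u\rangle\to_H v$. By hypothesis $\langle u\rangle\to_G v$, which means $N_G[v]\in\langle u\rangle^\uparrow$, and hence $u\in N_G[v]$. Since $G$ is simple and $u\neq v$ (because $uv\in E(H)$ is a genuine edge), this forces $uv\in E(G)$. Therefore $E(H)\subseteq E(G)$, and since the vertex sets agree by assumption, $H$ is a spanning subgraph of $G$.

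There is no real obstacle here: the argument is purely a translation between the combinatorial containment $N_H[v]\subseteq N_G[v]$ and the upward-closure property of the induced filter. The only subtlety worth flagging explicitly is using the absence of loops to pass from $u\in N_G[v]$ to $uv\in E(G)$ in the converse.
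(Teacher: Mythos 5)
Your proposal is correct and follows essentially the same argument as the paper: the forward direction via $N_H[v]\subseteq N_G[v]$ and upward closure of the induced filter, and the converse via constant nets detecting edges. Your extra remark on using $u\neq v$ to pass from $u\in N_G[v]$ to $uv\in E(G)$ is a harmless refinement of a step the paper takes implicitly.
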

\begin{proof}
Let $H$ and $G$ be graphs such that $V(H)=V(G)$. Denote by $\to_H$ and $\to_G$ the convergences in $H$ and $G$, respectively. Suppose that $H$ is a subgraph of $G$, that is, a spanning subgraph. We prove that $\to_G \leq \to_H$.

Let $\varphi \in \textsc{Nets}(V(H))$ be a net such that $\varphi \to_H v$. Then $N_H[v] \in \varphi^{\uparrow}$. Since $H$ is a subgraph of $G$, we have $N_H[v] \subseteq N_G[v]$. Hence $N_G[v] \in \varphi^{\uparrow}$, and therefore $\varphi \to_G v$. This proves that $\to_G \leq \to_H$.

Conversely, suppose that $\to_G \leq \to_H$. We must show that every edge of $H$ is also an edge of $G$. Let $xy \in E(H)$. Then $\langle x \rangle \to_H y$. Since $\to_G \leq \to_H$, it follows that $\langle x \rangle \to_G y$. Hence $xy \in E(G)$, and therefore $H$ is a subgraph of $G$.
\end{proof}

\section{Connectedness and Compactness: Characterizations and Consequences}

Although convergence in a graph is not always topological, it admits an associated topological modification. In the following results, we study the structure of the open and closed sets in this topology, which will allow us to relate the connectedness of the graph with connectedness in the associated convergence space.

\begin{proposition}
	\label{clopen}
	Let $G$ be a graph and let $U,F\subseteq V(G)$.
	
	\begin{enumerate}
		\item A subset $U$ is open if and only if $N[v]\subseteq U$ for every $v\in U$.
		
		\item A subset $F$ is closed if and only if it is a union of connected components of $G$ as a graph.
	\end{enumerate}
\end{proposition}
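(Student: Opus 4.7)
The plan is to prove (1) directly from the definitions of openness and of the convergence on a graph, and then to deduce (2) by taking complements and translating the pointwise condition from (1) into the combinatorial statement about connected components.

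For (1), I would first recall that by definition $U$ is open iff $U\in\varphi^{\uparrow}$ whenever $\varphi\to x\in U$, and that $\varphi\to v$ iff $N[v]\in\varphi^{\uparrow}$. For the forward direction, fix $v\in U$ and let $u\in N[v]$; the constant net $\langle u\rangle$ has filter $\{A:u\in A\}$, which contains $N[v]$ since $u\in N[v]$, so $\langle u\rangle\to v\in U$. By openness, $U\in\langle u\rangle^{\uparrow}$, i.e., $u\in U$. Hence $N[v]\subseteq U$. For the converse, assume $N[v]\subseteq U$ for every $v\in U$ and take any net $\varphi\to v\in U$. Then $N[v]\in\varphi^{\uparrow}$, and since $\varphi^{\uparrow}$ is upward closed and $N[v]\subseteq U$, we get $U\in\varphi^{\uparrow}$. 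So $U$ is open.

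For (2), I would work with the complement: $F$ is closed iff $V(G)\setminus F$ is open. By (1), this is equivalent to $N[v]\subseteq V(G)\setminus F$ for every $v\notin F$, i.e., no vertex outside $F$ has a neighbour inside $F$. By symmetry of the adjacency relation, this is equivalent to no vertex of $F$ having a neighbour outside $F$, that is, there are no edges between $F$ and $V(G)\setminus F$. It then remains to show that this last condition characterises unions of connected components. If $F$ is a union of connected components, any edge with one endpoint in $F$ has both endpoints in the same component, hence in $F$. Conversely, suppose $F$ has no edges leaving it, and let $v\in F$ lie in a connected component $C$; for any $w\in C$, pick a path $v=v_0v_1\dots v_n=w$ in $G$, and argue by induction on $i$ that $v_i\in F$: since $v_i\in F$ and $v_iv_{i+1}\in E(G)$, the edge cannot leave $F$, so $v_{i+1}\in F$. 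Thus $C\subseteq F$ and $F$ is a union of components.

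There is no real obstacle here; both parts are essentially unwinding definitions. The one point to be careful about is the passage from the pointwise condition ``every $v\notin F$ has $N[v]\cap F=\emptyset$'' to the symmetric ``no edges cross the boundary of $F$'', which uses that the graph is undirected, and the induction along a path to conclude that a component cannot be partially contained in $F$.
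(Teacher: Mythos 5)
Your proof is correct. Part (1) is essentially identical to the paper's argument: the forward direction uses the constant net $\langle u\rangle$ for $u\in N[v]$ (which converges to $v$ because its induced filter is the principal filter at $u$ and contains $N[v]$), and the converse uses upward closure of $\varphi^{\uparrow}$ together with $N[v]\subseteq U$. For part (2) you take a mildly different route: you reduce closedness to openness of the complement, apply (1), use the symmetry of adjacency to rephrase the condition as ``no edge crosses between $F$ and $V(G)\setminus F$'', and then run an explicit induction along a path to show such an $F$ is a union of components. The paper instead argues directly with the adherence operator $\operatorname{adh}(F)=N[F]$: for the converse it shows $\operatorname{adh}(F)\subseteq F$ when $F$ is a union of components, and for the forward direction it asserts (rather tersely) that the component of any $v\in F$ lies in $F$ --- the path induction you spell out is exactly what that terse step needs. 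The two proofs have the same mathematical content; yours makes the boundary-crossing reformulation and the path induction explicit and leans on (1), while the paper's is phrased in terms of the adherence operator, which is the language used elsewhere in the paper (e.g.\ in the proof of Theorem \ref{top}). Either organization is fine; your use of undirectedness to pass between ``$N[v]\cap F=\emptyset$ for $v\notin F$'' and ``no edges leave $F$'' is the one point that genuinely needs the graph to be undirected, and you correctly flag it.
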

\begin{proof}
	\text{}
	\begin{enumerate}
		\item Suppose that $U$ is open. Let $v\in U$ and $u\in N[v]$. Then $\langle u\rangle \to v$. Since $U$ is open, some tail of the net $\langle u\rangle$ is contained in $U$. Hence $u\in U$, and therefore $N[v]\subseteq U$.
		
		Conversely, suppose that $N[u]\subseteq U$ for every $u\in U$. Let $\varphi\in\textsc{Nets}(V(G))$ be such that $\varphi \to u$ with $u\in U$. Then $N[u]\in\varphi^{\uparrow}$. Since $N[u]\subseteq U$ and $\varphi^{\uparrow}$ is a filter, it follows that $U\in\varphi^{\uparrow}$. Hence $U$ is open.
		
		\item Suppose that $F$ is closed. Then the connected component of $G$ containing any $v\in F$ is contained in $F$. Hence $F$ is a union of connected components of $G$.
		
		Conversely, suppose that $F$ is a union of connected components of $G$. Let $v\in \mathrm{adh}(F)$. Then there exists $u\in F$ such that $uv\in E(G)$. Since $F$ is a union of connected components, there exists a connected component $C$ such that $u\in V(C)\subseteq F$. Hence $v\in V(C)\subseteq F$. Therefore $F$ is closed.
	\end{enumerate}
\end{proof}

To the best of our knowledge, only item $(1)$ of Proposition~\ref{clopen} appears in the literature; it is proved in~\cite{probconv} using filters, albeit with a slightly different argument. The Proposition \ref{clopen} implies that every open set is closed and conversely, and that the connected components form a basis for the topology. Consequently, we obtain the following corollary.

\begin{corollary}
	\label{topmod}
	The topological modification associated with the graph convergence turns out to be an Alexandroff space, in which open sets are precisely unions of connected components.
\end{corollary}

Our goal now is to prove that connectedness in a graph implies path-connectedness when the graph is regarded as a convergence space. To achieve this, we first establish a version of the pasting lemma for limit spaces. It is worth noting that, as shown in~\cite{preuss}, this lemma does not hold for every type of preconvergence space. This result ensures, in particular, that if there is a path from $x$ to $y$ and a path from $y$ to $z$, then these can be concatened to obtain a path from $x$ to $z$.
A path in a preconvergence space $X$ is a continuous function $\gamma\colon[0,1]\to X$, where $[0,1]$ is endowed with its usual topology. If $\gamma(0)=\gamma(1)$ we say that $\gamma$ is a loop. The definition of a path-connected preconvergence space is the same as in the topological setting.

We say that a family $\mathcal{C}$ of subsets of a preconvergence space $X$ is \emph{locally finite} if for every $x \in X$ there exists an open set $U \subseteq X$ such that $x \in U$ and the set
$
\{\, V \in \mathcal{C} : V \cap U \neq \emptyset \,\}
$
is finite.

\begin{lemma}[{\cite[Lemma 3.2]{nettopology}}]
	\label{pastinglemma}
	Let $X$ and $Y$ be limit spaces, and let $f\colon X\to Y$ be a function. If $\mathcal{C}$ is a locally finite cover of $X$ by closed sets such that the restriction $f|_C$ is continuous for every $C \in \mathcal{C}$, then $f$ is continuous.
\end{lemma}
\begin{restatable}{theorem}{path}
	\label{path}
	Every connected graph $G$ is path-connected as a convergence space.
\end{restatable}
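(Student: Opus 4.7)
The plan is to convert a combinatorial graph path into a continuous map from the unit interval to the vertex set, taking advantage of the fact that the pretopological convergence on $V(G)$ is so coarse that constant nets at neighbouring vertices converge to one another. Concretely, given $x,y\in V(G)$, graph-connectedness supplies a path $v_0 v_1\cdots v_n$ with $v_0=x$ and $v_n=y$ (the case $x=y$ is handled by the constant map). I will build $\gamma\colon[0,1]\to V(G)$ by splitting $[0,1]$ into the $n$ closed sub-intervals $I_i=[i/n,(i+1)/n]$ and defining a continuous ``one-step'' map on each piece, then applying the Pasting Lemma \ref{pastinglemma}.

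On each $I_i$, I set
\[
\gamma_i(t)=\begin{cases} v_i, & t\in [i/n,(i+1)/n),\\ v_{i+1}, & t=(i+1)/n.\end{cases}
\]
I would verify continuity of $\gamma_i\colon I_i\to V(G)$ point by point. Away from the right endpoint $\gamma_i$ is locally constant, hence continuous. At $t=(i+1)/n$, if $\langle t_\alpha\rangle$ is any net in $I_i$ with $t_\alpha\to t$, then eventually $\gamma_i(t_\alpha)\in\{v_i,v_{i+1}\}$; since $v_i v_{i+1}\in E(G)$ we have $\{v_i,v_{i+1}\}\subseteq N[v_{i+1}]$, so $N[v_{i+1}]\in(\gamma_i\circ\langle t_\alpha\rangle)^\uparrow$, which is exactly $\gamma_i(t_\alpha)\to v_{i+1}=\gamma_i(t)$. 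On overlapping endpoints the definitions agree: $\gamma_i((i+1)/n)=v_{i+1}=\gamma_{i+1}((i+1)/n)$, so the $\gamma_i$'s paste to a well-defined $\gamma\colon[0,1]\to V(G)$ with $\gamma(0)=x$ and $\gamma(1)=y$. The family $\{I_i\}$ is a finite closed cover, hence locally finite, and $[0,1]$ and $V(G)$ are both limit spaces (the former being topological, the latter pretopological), so Lemma \ref{pastinglemma} yields the continuity of $\gamma$.

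The only non-routine point is the verification that $\gamma_i$ is continuous at its right endpoint, and this is precisely where the graph hypothesis enters: the edge $v_iv_{i+1}$ is exactly what guarantees $v_i\in N[v_{i+1}]$, so that a tail of $\gamma_i$ approaching $(i+1)/n$ lies in the neighbourhood used to define convergence to $v_{i+1}$. I anticipate no serious obstacle beyond being careful with this one case at each breakpoint; the rest is bookkeeping around the subspace topology on $I_i$ and the statement of the pasting lemma. An alternative, if one wishes to avoid Lemma \ref{pastinglemma} altogether, is to define $\gamma$ globally by the formula above and verify continuity directly: for $t=i/n$ with $1\le i\le n-1$, nets in $[0,1]$ converging to $t$ eventually take values in $\{v_{i-1},v_i\}\subseteq N[v_i]$, which again uses only the edges $v_{i-1}v_i$ and $v_iv_{i+1}$ of the chosen path.
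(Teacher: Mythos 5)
Your proposal is correct and follows essentially the same route as the paper: the key point in both is that a step function whose consecutive values are adjacent vertices is continuous (because $v_i\in N[v_{i+1}]$), glued together via the Pasting Lemma~\ref{pastinglemma}. The only cosmetic difference is that you subdivide $[0,1]$ along the whole graph path at once, while the paper handles a single edge with a two-valued map on $[0,1]$ and reduces the general case to that via concatenation of paths; your closing remark that the Pasting Lemma can be avoided by a direct check also matches the paper's own remark after the theorem.
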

\begin{proof}
	By Pasting Lemma~\ref{pastinglemma}, since $G$ is connected, it is enough to show that any two adjacents vertices are joined by a path in the convergence space. Let $xy\in E(G)$ be an edge of $G$, and consider the function $\gamma:[0,1]\to V(G)$ defined by
	
	\[
	\gamma(t)=
	\begin{cases}
		x & \text{if } 0\leq t\le \tfrac{1}{2},\\[6pt]
		y & \text{if } \tfrac{1}{2}< t\le 1.
	\end{cases}
	\]

	Observe that $\gamma$ is continuous because $x$ and $y$ are adjacent. Hence $G$ is a path-connected convergence space.
\end{proof}

	\begin{remark}
		\label{repath}
		The proof of Theorem \ref{path} is independent of the Pasting Lemma \ref{pastinglemma}; we use it only to simplify the argument. In fact, a path in a graph can naturally be viewed as a path in the associated convergence space.
		
		Let $v_0v_1\dots v_n$ be a path in $G$. Divide the interval $[0,1]$ into $n$ equal subintervals and define a function $\gamma:[0,1]\to V(G)$, where $[0,1]$ is endowed with its usual topology, by
		\[
		\gamma(x)=
		\begin{cases}
			v_0 & \text{if } x=0,\\
			v_k & \text{if } \frac{k-1}{n} < x \le \frac{k}{n}, \quad k=1,\dots,n.
		\end{cases}
		\]
		Then $\gamma$ is continuous, $\gamma(0)=v_0$ and $\gamma(1)=v_n$.
	\end{remark}

A preconvergence space $\langle X, L \rangle$ is said to be connected if every continuous function $f : X \to \{0,1\}$ is constant, where $\{0,1\}$ is endowed with the discrete topology. Notice that this is equivalent to requiring that the topological modification $\langle X, \tau_L \rangle$ be a connected topological space.

In \cite{NeumannLaraWilson1995}, Victor Neumann-Lara and Richard Wilson define a compatible topology on the vertex set of a graph as a topology such that every induced subgraph of $G$ is connected if and only if its vertex set is topologically connected. Their paper concludes with the following question: \textit{Which graphs have compatible compact topologies?} That is, graphs for which every compatible topology is compact.

In what follows, we prove that the topological modification of a graph is a compatible topology. Furthermore, we raise the following question: which combinatorial properties must a graph satisfy so that any convergence whose topological modification is a compatible topology is a compact convergence (in the sense to be defined later)?

\begin{theorem}
	\label{connected}
	For every graph $G$, the following are equivalent:
	\begin{enumerate}
		\item $G$ is connected as a graph;
		\item $G$ is connected as a convergence space.
	\end{enumerate}
\end{theorem}

\begin{proof}
	The proof basically follows from Proposition \ref{clopen}, but we present an alternative proof. 
	
	First, let us consider $G$ as a connected graph and show that it is also connected as a convergence space. Let $f:V(G)\to\{0,1\}$ be a continuous function. We must prove that $f$ is constant. For this, since $G$ is connected, it is enough to prove that $f(x)=f(y)$ for $xy\in E(G)$. We have $\langle x\rangle \to x$ and $\langle x\rangle \to y$. Then $f\circ \langle x\rangle \to f(x)$ and $f\circ \langle x\rangle \to f(y)$. Then $f(x)=f(y)$. This proves that $G$ is connected as convergence space.
	
	Conversely, suppose that $G$ is a disconnected graph. There are vertices $x,y\in V(G)$ that are not joined by a path in $G$. Consider the function $f: V(G)\to \{0,1\}$ such that $f(z)=0$ if $z=x$ or there is path between $z$ and $x$,  and $f(z)=1$  if $z=y$ or there is path between $z$ and $y$, and $f(z)=0$ otherwise. We prove that $f$ is continuous. Given a net $\varphi\in\textsc{Nets}(V(G)) $ such that $\varphi \to v$. There is $d\in \hbox{dom}(\varphi)$ such that $\varphi [d^\uparrow]\subseteq N[v]$. The neighbors of $z$ share the same properties as $z$ concerning the existence of paths between $x$ and $y$. Then $f\circ \varphi[d^\uparrow]\subseteq \{f(z)\}$. This prove that $f\circ\varphi \to f(v)$. Then $f$ is continuous. Since $f$ is not constant, $G$ can not be connected as convergence space.
\end{proof}
\begin{remark}
	\text{}
	\begin{enumerate}
	
		\item In~\cite{mynardconnec}, Herrejón and Mynard present a version of Theorem \ref{connected} for finite digraphs using filters, in which connectedness as a convergence space is equivalent to connectedness of the underlying graph when edge orientations are ignored. In their work, finite digraphs are also used to construct examples involving enclosing sets, $T$-subspaces, and sandwiched sets in convergence spaces.
		
		\item Recall that a graph $G$ is said to be \emph{2-connected} if it is connected and remains connected after the removal of any single vertex. By Theorem \ref{connected}, $G$ is $2$-connected if and only if it is connected and, for every $v \in V(G)$, every continuous function $V(G)\setminus\{v\} \to \{0,1\}$ admits a continuous extension to $V(G)$.
	\end{enumerate}
\end{remark}

\begin{corollary}
Let $T$ be a graph. If $T$ is path-connected as a convergence space and has no loops, then is a tree.
\end{corollary}

\begin{proof}
	
		Suppose that $T$ is path-connected as a convergence space and has no loops. 
		Since a path-connected convergence space has a path-connected topological modification, it follows that this topological space is connected. 
		By Theorem~\ref{connected}, the graph $T$ is therefore connected. 
		Moreover, $T$ has no cycles, because any cycle in the graph would correspond to a loop in the convergence space by Remark~\ref{repath}. 
		Hence, $T$ is connected and acyclic, that is, a tree.
\end{proof}

\label{sec3}

We now establish a connection between compactness as a convergence space and the existence of finite dominating sets in a graph. Recall that a topological space is compact if every open cover has a finite subcover, or equivalently, if every net has a convergent subnet. With this in mind, a preconvergence space $X$ is said to be \emph{compact} if every net has a convergent subnet.

To obtain a definition analogous to open covers in the topological case, we need to generalize the concept of open covers. In our setting, these are the convergence systems. Let $\langle X,L\rangle$ be a preconvergence space. A family $\mathcal{C}$ of subsets of $X$ is a \emph{local convergence system} at $x\in X$ if, for every net $\varphi\in\textsc{Nets}(X)$ such that $\varphi\to_L x$, there exists $C\in\mathcal{C}$ with $C\in\varphi^{\uparrow}$. We say that $\mathcal{C}$ is a \emph{convergence system} if it is a local convergence system for each $x\in X$. The usual terminology is \emph{covering system}, as in~\cite{BB}.

\begin{theorem}[{\cite[Proposition 1.4.15]{BB}}]
			A convergence space $\langle X,L\rangle$ is compact if and only if every convergence system has a finite subcover.
	\end{theorem}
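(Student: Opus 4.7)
The plan is to argue by contradiction in both directions, translating between subnets of a given net and proper filter refinements of its induced filter via the isotone property.

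For the implication $(\Leftarrow)$, I would assume every convergence system admits a finite subcover and suppose some net $\varphi\colon\mathbb{D}\to X$ has no convergent subnet. For any convergent net $\psi\to_L x$, a common proper refinement $\mathcal{G}$ of $\varphi^{\uparrow}$ and $\psi^{\uparrow}$ would be the induced filter of a net that is simultaneously a subnet of both $\varphi$ and $\psi$; by isotone such a net would converge to $x$, contradicting the absence of convergent subnets of $\varphi$. Hence there must exist $A_\psi\in\varphi^{\uparrow}$ and $B_\psi\in\psi^{\uparrow}$ with $A_\psi\cap B_\psi=\emptyset$. Setting $C_\psi:=X\setminus A_\psi$ gives $B_\psi\subseteq C_\psi$, so $C_\psi\in\psi^{\uparrow}$, and the family $\mathcal{C}:=\{C_\psi:\psi\text{ is convergent in }X\}$ is a convergence system. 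A finite subcover $\{C_{\psi_1},\ldots,C_{\psi_n}\}$ would admit $d^{*}\in\mathbb{D}$ above every index witnessing $A_{\psi_i}\in\varphi^{\uparrow}$, yielding $\varphi(d^{*})\in\bigcap_{i}A_{\psi_i}=X\setminus\bigcup_{i}C_{\psi_i}$; applying the finite subcover to the centered constant net $\langle\varphi(d^{*})\rangle$ then forces $\varphi(d^{*})\in C_{\psi_i}$ for some $i$, the desired contradiction.

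For the implication $(\Rightarrow)$, I would assume $X$ is compact and $\mathcal{C}$ is a convergence system without any finite subcover. For each finite $\mathcal{F}\subseteq\mathcal{C}$, choose a convergent net $\varphi_\mathcal{F}\to_L x_\mathcal{F}$ with $C\notin\varphi_\mathcal{F}^{\uparrow}$ for every $C\in\mathcal{F}$, so that $\varphi_\mathcal{F}$ is frequently outside each such $C$. The plan is to splice these into a single ``diagonal'' net $\Phi$ on a directed set of triples $(\mathcal{F},C,d)$ with $C\in\mathcal{F}$ and $d\in\mathrm{dom}(\varphi_\mathcal{F})$, setting $\Phi(\mathcal{F},C,d):=\varphi_\mathcal{F}(e)$ for some $e\geq d$ with $\varphi_\mathcal{F}(e)\notin C$, and ordering so that past each threshold $(\{C_0\},C_0,d_0)$ only triples with the ``target'' coordinate equal to $C_0$ appear; this arrangement forces $\Phi$ to be eventually outside each fixed $C_0\in\mathcal{C}$. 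Compactness then yields a convergent subnet $\Phi'\to_L y$, and the convergence-system property supplies some $C\in\mathcal{C}$ with $C\in\Phi'^{\uparrow}$, contradicting the containment of a tail of $\Phi'$ in $X\setminus C$.

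The main obstacle is the forward construction of $\Phi$: arranging the order on the index set so that each $C_0\in\mathcal{C}$ is eventually avoided while preserving directedness is delicate, because $\bigcup\mathcal{F}$ may coincide with $X$ and so one cannot simply select diagonally in $X\setminus\bigcup\mathcal{F}$. The backward direction is by contrast essentially formal once the filter--subnet dictionary is invoked, reducing to a single disjointness witness per convergent net and an application of the finite subcover to a constant net.
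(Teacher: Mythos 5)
The paper does not actually prove this statement (it is quoted from Dolecki), so only the internal correctness of your argument is at issue. Your backward implication is correct: if $\varphi^{\uparrow}$ and $\psi^{\uparrow}$ had no pair of disjoint members, the filter they generate would be proper, hence induced by a net (a fact recorded in Section~\ref{sec1}), and by isotony that net would be a convergent subnet of $\varphi$; the construction of $\mathcal{C}$ and the finite-subcover contradiction then go through. (To avoid choosing one $A_\psi$ for each member of the proper class of convergent nets, it is cleaner to take $\mathcal{C}=\{X\setminus A: A\in\varphi^{\uparrow}\}$, which has the same local convergence-system property; also, once the subfamily covers $X$, the detour through the constant net $\langle\varphi(d^{*})\rangle$ is unnecessary.)

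The forward implication has a genuine gap, exactly where you flag it, and it is not merely a delicacy. First, the ordering you describe cannot exist: if beyond the threshold $(\{C_0\},C_0,d_0)$ every index has second coordinate $C_0$, and beyond the threshold for some $C_1\neq C_0$ every index has second coordinate $C_1$, then directedness gives an index above both thresholds, which would have to carry two different second coordinates. Second, and more fundamentally, a net $\Phi$ eventually outside every $C\in\mathcal{C}$ means $X\setminus C\in\Phi^{\uparrow}$ for all $C$, which forces $\{X\setminus C: C\in\mathcal{C}\}$ to have the finite intersection property, i.e.\ no finite subfamily of $\mathcal{C}$ may cover $X$. You negated a different statement: choosing a convergent $\varphi_{\mathcal{F}}$ with $C\notin\varphi_{\mathcal{F}}^{\uparrow}$ for all $C\in\mathcal{F}$ is the negation of ``some finite subfamily is itself a convergence system,'' and under that reading $\bigcup\mathcal{F}$ can equal $X$, so the net you are trying to splice together need not exist at all. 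In this paper ``finite subcover'' means a finite subfamily whose union is $X$ (that is how it is used in Theorem~\ref{teocomp2}, where it yields $V(G)=\bigcup_i N[v_i]$), and with that reading no diagonal construction is needed: if no finite subfamily covers $X$, the complements $\{X\setminus C: C\in\mathcal{C}\}$ generate a proper filter, which is induced by some net $\Phi$; compactness gives a convergent subnet $\Phi'\to_L y$, the convergence-system property at $y$ gives some $C\in\Phi'^{\uparrow}$, while $X\setminus C\in\Phi^{\uparrow}\subseteq\Phi'^{\uparrow}$, so $\emptyset\in\Phi'^{\uparrow}$, a contradiction. That is precisely your own closing argument; the repair is to obtain $\Phi$ from the finite intersection property rather than from the splicing construction.
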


\begin{proposition}[{\cite[Lemma 7.4]{Jech}}]
	Let $\mathcal{F}$ be a proper filter on $X$. The following statements are equivalent:
	\begin{enumerate}
		\item $\mathcal{F}$ is an ultrafilter on $X$, i.e, a maximal filter with respect to inclusion;
		\item For every $A \subseteq X$, either $A \in \mathcal{F}$ or $X \setminus A \in \mathcal{F}$.

	\end{enumerate}
\end{proposition}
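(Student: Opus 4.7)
My plan is to prove the equivalences in a cycle $(1) \Rightarrow (2) \Rightarrow (3) \Rightarrow (1)$, which is the most economical route since each implication is a short argument using only the filter axioms and the properness of $\mathcal{F}$. This is the classical ultrafilter characterization, so the proof is mostly bookkeeping; no topological or convergence-theoretic input is needed.

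For $(1) \Rightarrow (2)$, I would take $A \subseteq X$ with $A \notin \mathcal{F}$ and build a proper filter strictly containing $\mathcal{F}$ unless $X \setminus A$ is already in $\mathcal{F}$. Specifically, I would consider the family $\mathcal{B} = \{F \cap (X \setminus A) : F \in \mathcal{F}\}$. If $\emptyset \in \mathcal{B}$, then some $F \in \mathcal{F}$ is contained in $A$, forcing $A \in \mathcal{F}$ by upward closure, a contradiction; otherwise $\mathcal{B}$ generates a proper filter $\mathcal{G} \supseteq \mathcal{F}$ with $X \setminus A \in \mathcal{G}$, and maximality of $\mathcal{F}$ forces $\mathcal{G} = \mathcal{F}$, so $X \setminus A \in \mathcal{F}$.

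For $(2) \Rightarrow (3)$, suppose $A \cup B \in \mathcal{F}$ but $A, B \notin \mathcal{F}$. By (2), both $X \setminus A$ and $X \setminus B$ lie in $\mathcal{F}$, hence so does their intersection $X \setminus (A \cup B)$; intersecting with $A \cup B$ gives $\emptyset \in \mathcal{F}$, contradicting properness.

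For $(3) \Rightarrow (1)$, suppose some proper filter $\mathcal{G}$ properly contains $\mathcal{F}$, and pick $A \in \mathcal{G} \setminus \mathcal{F}$. Since $X = A \cup (X \setminus A) \in \mathcal{F}$, hypothesis (3) yields $A \in \mathcal{F}$ or $X \setminus A \in \mathcal{F}$; the first is excluded, so $X \setminus A \in \mathcal{F} \subseteq \mathcal{G}$, and then $\emptyset = A \cap (X \setminus A) \in \mathcal{G}$ contradicts properness of $\mathcal{G}$. The only step worth pausing on is $(1) \Rightarrow (2)$, since one must verify that $\mathcal{B}$ really generates a proper filter and that this filter still contains all of $\mathcal{F}$ (taking $F' = X$ in the generating family recovers $X \setminus A$, and taking $A$'s complement of a general $F$ ensures $F \in \mathcal{G}$ by upward closure); everything else is a one-line application of finite intersection and complementation.
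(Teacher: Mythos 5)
Your proof is correct. Note that the paper does not actually prove this proposition---it is quoted from Dolecki's text \cite{dol} without proof---so there is no internal argument to compare against; your cyclic scheme $(1)\Rightarrow(2)\Rightarrow(3)\Rightarrow(1)$ is the standard one and each step checks out: in $(1)\Rightarrow(2)$ the family $\{F\cap(X\setminus A):F\in\mathcal{F}\}$ is indeed a proper filter base containing a base of $\mathcal{F}$ whenever $A\notin\mathcal{F}$, and the remaining two implications are the one-line intersection arguments you give. The only cosmetic remark is that your parenthetical justification that $\mathcal{F}\subseteq\mathcal{G}$ in $(1)\Rightarrow(2)$ is phrased awkwardly ("taking $A$'s complement of a general $F$"); what you mean, and what suffices, is simply that $F\supseteq F\cap(X\setminus A)$ for each $F\in\mathcal{F}$, so $F$ lies in the generated filter by upward closure.
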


A subset $D \subseteq V(G)$ is called a \emph{dominating set} of $G$ if every vertex $v \in V(G) \setminus D$ is adjacent to at least one vertex $u \in D$ (see Figure \ref{exm3}). Equivalently, for all $v \in V(G)$, either $v \in D$ or there exists $u \in D$ such that $uv \in E(G)$. In convergence context, this is a finite dense subset.

\begin{theorem}
	\label{teocomp2}
	
		For every graph $G$, the following are equivalent:
	\begin{enumerate}
		\item $G$ is compact as convergence space;
		\item  $G$ has a finite dominating set.
	\end{enumerate}

\end{theorem}

\begin{proof}
	Suppose that $G$ is compact as convergence space. Notice that, by the definition of convergence on $V(G)$, the family $\mathcal{N}=\{N[v]:v\in V(G)\}$ is a convergence system. Since $G$ is compact, $\mathcal{N}$ has a finite subcover. Then there are $v_0,\cdots,v_n\in V(G)$ such that $V(G)=\bigcup_{0\leq i\leq n} N[v_i]$. It follows that $D=\{v_0,v_1,\cdots,v_n\}$ is a finite dominating subset.
	
	Conversely, suppose that $G$ has a finite dominating subset $D$. 
	It is well known that, by Zorn's Lemma, every filter is contained in an ultrafilter 
	(see \cite[Lemma 7.5]{Jech}). Then it is sufficient to show that every net $\varphi\in\textsc{Nets}(V(G))$  whose induced filter $\varphi^\uparrow$ is an ultrafilter converges. Notice that $V(G)=N[D]\in\varphi^\uparrow$. Suppose, for the sake of contradiction, that $\varphi$ does not converge. For every $v\in V(G)$ we have $N[v]\notin \varphi^\uparrow$. Since $\varphi^\uparrow$ is an ultrafilter, it follows that $V(G)\setminus N[v]\in \varphi^\uparrow$.  In this case, we have $ \bigcap_{v\in D} (V(G)\setminus N[v])=V(G)\setminus N[D]=\emptyset\in\varphi^\uparrow$, since $D$ is finite. This contradiction tells us that $\varphi$ converges. Therefore, $G$ is compact as convergence space.
\end{proof}

From this point onward, the term \emph{compact graph} will always mean a graph that is compact as a convergence space.

\begin{figure}[H]
	\centering

	\tikzset{every picture/.style={line width=0.75pt}} %set default line width to 0.75pt        
	
	\begin{tikzpicture}[x=0.75pt,y=0.75pt,yscale=-1,xscale=1]
		%uncomment if require: \path (0,300); %set diagram left start at 0, and has height of 300
		
		%Straight Lines [id:da5658635454830547] 
		\draw [color={rgb, 255:red, 0; green, 0; blue, 0 }  ,draw opacity=1 ]   (301,123.6) -- (321,123.6) ;
		\draw [shift={(321,123.6)}, rotate = 0] [color={rgb, 255:red, 0; green, 0; blue, 0 }  ,draw opacity=1 ][fill={rgb, 255:red, 0; green, 0; blue, 0 }  ,fill opacity=1 ][line width=0.75]      (0, 0) circle [x radius= 3.35, y radius= 3.35]   ;
		\draw [shift={(301,123.6)}, rotate = 0] [color={rgb, 255:red, 0; green, 0; blue, 0 }  ,draw opacity=1 ][fill={rgb, 255:red, 0; green, 0; blue, 0 }  ,fill opacity=1 ][line width=0.75]      (0, 0) circle [x radius= 3.35, y radius= 3.35]   ;
		%Straight Lines [id:da913330825976328] 
		\draw [color={rgb, 255:red, 0; green, 0; blue, 0 }  ,draw opacity=1 ]   (341,123.6) -- (361,123.6) ;
		\draw [shift={(361,123.6)}, rotate = 0] [color={rgb, 255:red, 0; green, 0; blue, 0 }  ,draw opacity=1 ][fill={rgb, 255:red, 0; green, 0; blue, 0 }  ,fill opacity=1 ][line width=0.75]      (0, 0) circle [x radius= 3.35, y radius= 3.35]   ;
		\draw [shift={(341,123.6)}, rotate = 0] [color={rgb, 255:red, 0; green, 0; blue, 0 }  ,draw opacity=1 ][fill={rgb, 255:red, 0; green, 0; blue, 0 }  ,fill opacity=1 ][line width=0.75]      (0, 0) circle [x radius= 3.35, y radius= 3.35]   ;
		%Straight Lines [id:da27818178011533623] 
		\draw [color={rgb, 255:red, 0; green, 0; blue, 0 }  ,draw opacity=1 ]   (381,123.6) -- (401,123.6) ;
		\draw [shift={(401,123.6)}, rotate = 0] [color={rgb, 255:red, 0; green, 0; blue, 0 }  ,draw opacity=1 ][fill={rgb, 255:red, 0; green, 0; blue, 0 }  ,fill opacity=1 ][line width=0.75]      (0, 0) circle [x radius= 3.35, y radius= 3.35]   ;
		\draw [shift={(381,123.6)}, rotate = 0] [color={rgb, 255:red, 0; green, 0; blue, 0 }  ,draw opacity=1 ][fill={rgb, 255:red, 0; green, 0; blue, 0 }  ,fill opacity=1 ][line width=0.75]      (0, 0) circle [x radius= 3.35, y radius= 3.35]   ;
		%Straight Lines [id:da5685159320353628] 
		\draw [color={rgb, 255:red, 0; green, 0; blue, 0 }  ,draw opacity=1 ]   (301,144.6) -- (321,144.6) ;
		\draw [shift={(321,144.6)}, rotate = 0] [color={rgb, 255:red, 0; green, 0; blue, 0 }  ,draw opacity=1 ][fill={rgb, 255:red, 0; green, 0; blue, 0 }  ,fill opacity=1 ][line width=0.75]      (0, 0) circle [x radius= 3.35, y radius= 3.35]   ;
		\draw [shift={(301,144.6)}, rotate = 0] [color={rgb, 255:red, 0; green, 0; blue, 0 }  ,draw opacity=1 ][fill={rgb, 255:red, 0; green, 0; blue, 0 }  ,fill opacity=1 ][line width=0.75]      (0, 0) circle [x radius= 3.35, y radius= 3.35]   ;
		%Straight Lines [id:da5952590653874988] 
		\draw [color={rgb, 255:red, 0; green, 0; blue, 0 }  ,draw opacity=1 ]   (341,144.6) -- (361,144.6) ;
		\draw [shift={(361,144.6)}, rotate = 0] [color={rgb, 255:red, 0; green, 0; blue, 0 }  ,draw opacity=1 ][fill={rgb, 255:red, 0; green, 0; blue, 0 }  ,fill opacity=1 ][line width=0.75]      (0, 0) circle [x radius= 3.35, y radius= 3.35]   ;
		\draw [shift={(341,144.6)}, rotate = 0] [color={rgb, 255:red, 0; green, 0; blue, 0 }  ,draw opacity=1 ][fill={rgb, 255:red, 0; green, 0; blue, 0 }  ,fill opacity=1 ][line width=0.75]      (0, 0) circle [x radius= 3.35, y radius= 3.35]   ;
		%Straight Lines [id:da9999959012910239] 
		\draw [color={rgb, 255:red, 0; green, 0; blue, 0 }  ,draw opacity=1 ]   (381,144.6) -- (401,144.6) ;
		\draw [shift={(401,144.6)}, rotate = 0] [color={rgb, 255:red, 0; green, 0; blue, 0 }  ,draw opacity=1 ][fill={rgb, 255:red, 0; green, 0; blue, 0 }  ,fill opacity=1 ][line width=0.75]      (0, 0) circle [x radius= 3.35, y radius= 3.35]   ;
		\draw [shift={(381,144.6)}, rotate = 0] [color={rgb, 255:red, 0; green, 0; blue, 0 }  ,draw opacity=1 ][fill={rgb, 255:red, 0; green, 0; blue, 0 }  ,fill opacity=1 ][line width=0.75]      (0, 0) circle [x radius= 3.35, y radius= 3.35]   ;
		%Straight Lines [id:da7089652294706537] 
		\draw [color={rgb, 255:red, 0; green, 0; blue, 0 }  ,draw opacity=1 ]   (301,123.6) -- (301,144.6) ;
		\draw [shift={(301,144.6)}, rotate = 90] [color={rgb, 255:red, 0; green, 0; blue, 0 }  ,draw opacity=1 ][fill={rgb, 255:red, 0; green, 0; blue, 0 }  ,fill opacity=1 ][line width=0.75]      (0, 0) circle [x radius= 3.35, y radius= 3.35]   ;
		\draw [shift={(301,123.6)}, rotate = 90] [color={rgb, 255:red, 0; green, 0; blue, 0 }  ,draw opacity=1 ][fill={rgb, 255:red, 0; green, 0; blue, 0 }  ,fill opacity=1 ][line width=0.75]      (0, 0) circle [x radius= 3.35, y radius= 3.35]   ;
		%Straight Lines [id:da6581859871723577] 
		\draw [color={rgb, 255:red, 0; green, 0; blue, 0 }  ,draw opacity=1 ]   (321,123.6) -- (321,144.6) ;
		\draw [shift={(321,144.6)}, rotate = 90] [color={rgb, 255:red, 0; green, 0; blue, 0 }  ,draw opacity=1 ][fill={rgb, 255:red, 0; green, 0; blue, 0 }  ,fill opacity=1 ][line width=0.75]      (0, 0) circle [x radius= 3.35, y radius= 3.35]   ;
		\draw [shift={(321,123.6)}, rotate = 90] [color={rgb, 255:red, 0; green, 0; blue, 0 }  ,draw opacity=1 ][fill={rgb, 255:red, 0; green, 0; blue, 0 }  ,fill opacity=1 ][line width=0.75]      (0, 0) circle [x radius= 3.35, y radius= 3.35]   ;
		%Straight Lines [id:da8398749437218171] 
		\draw    (341,123.6) -- (341,144.6) ;
		\draw [shift={(341,144.6)}, rotate = 90] [color={rgb, 255:red, 0; green, 0; blue, 0 }  ][fill={rgb, 255:red, 0; green, 0; blue, 0 }  ][line width=0.75]      (0, 0) circle [x radius= 3.35, y radius= 3.35]   ;
		\draw [shift={(341,123.6)}, rotate = 90] [color={rgb, 255:red, 0; green, 0; blue, 0 }  ][fill={rgb, 255:red, 0; green, 0; blue, 0 }  ][line width=0.75]      (0, 0) circle [x radius= 3.35, y radius= 3.35]   ;
		%Straight Lines [id:da814305619367714] 
		\draw    (361,123.6) -- (361,144.6) ;
		\draw [shift={(361,144.6)}, rotate = 90] [color={rgb, 255:red, 0; green, 0; blue, 0 }  ][fill={rgb, 255:red, 0; green, 0; blue, 0 }  ][line width=0.75]      (0, 0) circle [x radius= 3.35, y radius= 3.35]   ;
		\draw [shift={(361,123.6)}, rotate = 90] [color={rgb, 255:red, 0; green, 0; blue, 0 }  ][fill={rgb, 255:red, 0; green, 0; blue, 0 }  ][line width=0.75]      (0, 0) circle [x radius= 3.35, y radius= 3.35]   ;
		%Straight Lines [id:da17110016093461633] 
		\draw    (381,123.6) -- (381,144.6) ;
		\draw [shift={(381,144.6)}, rotate = 90] [color={rgb, 255:red, 0; green, 0; blue, 0 }  ][fill={rgb, 255:red, 0; green, 0; blue, 0 }  ][line width=0.75]      (0, 0) circle [x radius= 3.35, y radius= 3.35]   ;
		\draw [shift={(381,123.6)}, rotate = 90] [color={rgb, 255:red, 0; green, 0; blue, 0 }  ][fill={rgb, 255:red, 0; green, 0; blue, 0 }  ][line width=0.75]      (0, 0) circle [x radius= 3.35, y radius= 3.35]   ;
		%Straight Lines [id:da5428581311679013] 
		\draw    (401,123.6) -- (401,144.6) ;
		\draw [shift={(401,144.6)}, rotate = 90] [color={rgb, 255:red, 0; green, 0; blue, 0 }  ][fill={rgb, 255:red, 0; green, 0; blue, 0 }  ][line width=0.75]      (0, 0) circle [x radius= 3.35, y radius= 3.35]   ;
		\draw [shift={(401,123.6)}, rotate = 90] [color={rgb, 255:red, 0; green, 0; blue, 0 }  ][fill={rgb, 255:red, 0; green, 0; blue, 0 }  ][line width=0.75]      (0, 0) circle [x radius= 3.35, y radius= 3.35]   ;
		%Straight Lines [id:da08076001137584699] 
		\draw [color={rgb, 255:red, 0; green, 0; blue, 0 }  ,draw opacity=1 ]   (321,123.6) -- (341,123.6) ;
		\draw [shift={(341,123.6)}, rotate = 0] [color={rgb, 255:red, 0; green, 0; blue, 0 }  ,draw opacity=1 ][fill={rgb, 255:red, 0; green, 0; blue, 0 }  ,fill opacity=1 ][line width=0.75]      (0, 0) circle [x radius= 3.35, y radius= 3.35]   ;
		\draw [shift={(321,123.6)}, rotate = 0] [color={rgb, 255:red, 0; green, 0; blue, 0 }  ,draw opacity=1 ][fill={rgb, 255:red, 0; green, 0; blue, 0 }  ,fill opacity=1 ][line width=0.75]      (0, 0) circle [x radius= 3.35, y radius= 3.35]   ;
		%Straight Lines [id:da03513773717912472] 
		\draw [color={rgb, 255:red, 0; green, 0; blue, 0 }  ,draw opacity=1 ]   (361,123.6) -- (381,123.6) ;
		\draw [shift={(381,123.6)}, rotate = 0] [color={rgb, 255:red, 0; green, 0; blue, 0 }  ,draw opacity=1 ][fill={rgb, 255:red, 0; green, 0; blue, 0 }  ,fill opacity=1 ][line width=0.75]      (0, 0) circle [x radius= 3.35, y radius= 3.35]   ;
		\draw [shift={(361,123.6)}, rotate = 0] [color={rgb, 255:red, 0; green, 0; blue, 0 }  ,draw opacity=1 ][fill={rgb, 255:red, 0; green, 0; blue, 0 }  ,fill opacity=1 ][line width=0.75]      (0, 0) circle [x radius= 3.35, y radius= 3.35]   ;
		%Straight Lines [id:da20768229704844765] 
		\draw [color={rgb, 255:red, 0; green, 0; blue, 0 }  ,draw opacity=1 ]   (321,144.6) -- (341,144.6) ;
		\draw [shift={(341,144.6)}, rotate = 0] [color={rgb, 255:red, 0; green, 0; blue, 0 }  ,draw opacity=1 ][fill={rgb, 255:red, 0; green, 0; blue, 0 }  ,fill opacity=1 ][line width=0.75]      (0, 0) circle [x radius= 3.35, y radius= 3.35]   ;
		\draw [shift={(321,144.6)}, rotate = 0] [color={rgb, 255:red, 0; green, 0; blue, 0 }  ,draw opacity=1 ][fill={rgb, 255:red, 0; green, 0; blue, 0 }  ,fill opacity=1 ][line width=0.75]      (0, 0) circle [x radius= 3.35, y radius= 3.35]   ;
		%Straight Lines [id:da9619748085032266] 
		\draw [color={rgb, 255:red, 0; green, 0; blue, 0 }  ,draw opacity=1 ]   (361,144.6) -- (381,144.6) ;
		\draw [shift={(381,144.6)}, rotate = 0] [color={rgb, 255:red, 0; green, 0; blue, 0 }  ,draw opacity=1 ][fill={rgb, 255:red, 0; green, 0; blue, 0 }  ,fill opacity=1 ][line width=0.75]      (0, 0) circle [x radius= 3.35, y radius= 3.35]   ;
		\draw [shift={(361,144.6)}, rotate = 0] [color={rgb, 255:red, 0; green, 0; blue, 0 }  ,draw opacity=1 ][fill={rgb, 255:red, 0; green, 0; blue, 0 }  ,fill opacity=1 ][line width=0.75]      (0, 0) circle [x radius= 3.35, y radius= 3.35]   ;
		%Straight Lines [id:da22141236450310908] 
		\draw [color={rgb, 255:red, 0; green, 0; blue, 0 }  ,draw opacity=1 ]   (341,101.6) -- (361,101.6) ;
		\draw [shift={(361,101.6)}, rotate = 0] [color={rgb, 255:red, 0; green, 0; blue, 0 }  ,draw opacity=1 ][fill={rgb, 255:red, 0; green, 0; blue, 0 }  ,fill opacity=1 ][line width=0.75]      (0, 0) circle [x radius= 3.35, y radius= 3.35]   ;
		\draw [shift={(341,101.6)}, rotate = 0] [color={rgb, 255:red, 0; green, 0; blue, 0 }  ,draw opacity=1 ][fill={rgb, 255:red, 0; green, 0; blue, 0 }  ,fill opacity=1 ][line width=0.75]      (0, 0) circle [x radius= 3.35, y radius= 3.35]   ;
		%Straight Lines [id:da7056169510188878] 
		\draw [color={rgb, 255:red, 0; green, 0; blue, 0 }  ,draw opacity=1 ]   (342,167.6) -- (362,167.6) ;
		\draw [shift={(362,167.6)}, rotate = 0] [color={rgb, 255:red, 0; green, 0; blue, 0 }  ,draw opacity=1 ][fill={rgb, 255:red, 0; green, 0; blue, 0 }  ,fill opacity=1 ][line width=0.75]      (0, 0) circle [x radius= 3.35, y radius= 3.35]   ;
		\draw [shift={(342,167.6)}, rotate = 0] [color={rgb, 255:red, 0; green, 0; blue, 0 }  ,draw opacity=1 ][fill={rgb, 255:red, 0; green, 0; blue, 0 }  ,fill opacity=1 ][line width=0.75]      (0, 0) circle [x radius= 3.35, y radius= 3.35]   ;
		%Straight Lines [id:da8253266545028729] 
		\draw    (301,144.6) -- (342,167.6) ;
		%Straight Lines [id:da32746667776717286] 
		\draw    (321,144.6) -- (362,167.6) ;
		%Straight Lines [id:da6262349434260627] 
		\draw    (341,144.6) -- (342,167.6) ;
		%Straight Lines [id:da7655122872005184] 
		\draw    (361,144.6) -- (362,167.6) ;
		%Straight Lines [id:da8960106488149803] 
		\draw    (381,144.6) -- (342,167.6) ;
		%Straight Lines [id:da3922027974467973] 
		\draw    (401,144.6) -- (362,167.6) ;
		%Straight Lines [id:da5738519814984683] 
		\draw    (341,101.6) -- (301,123.6) ;
		%Straight Lines [id:da28128958458175846] 
		\draw    (361,101.6) -- (321,123.6) ;
		%Straight Lines [id:da4747673419405457] 
		\draw    (341,101.6) -- (341,123.6) ;
		%Straight Lines [id:da2665257338422061] 
		\draw    (361,101.6) -- (361,123.6) ;
		%Straight Lines [id:da41919473228693804] 
		\draw    (341,101.6) -- (381,123.6) ;
		%Straight Lines [id:da7024965400946781] 
		\draw    (361,101.6) -- (402,124.6) ;
		%Straight Lines [id:da7447556264117463] 
		\draw  [dash pattern={on 0.84pt off 2.51pt}]  (413,124) -- (424,124.6) ;
		%Straight Lines [id:da21148886738321182] 
		\draw  [dash pattern={on 0.84pt off 2.51pt}]  (413,146) -- (424,146.6) ;
		%Straight Lines [id:da08129362383094474] 
		\draw  [dash pattern={on 0.84pt off 2.51pt}]  (390,106) -- (401,106.6) ;
		%Straight Lines [id:da26821060914600914] 
		\draw  [dash pattern={on 0.84pt off 2.51pt}]  (389,165) -- (400,165.6) ;
		%Shape: Circle [id:dp10106730810884246] 
		\draw  [color={rgb, 255:red, 208; green, 2; blue, 27 }  ,draw opacity=1 ] (334.5,101.6) .. controls (334.5,98.01) and (337.41,95.1) .. (341,95.1) .. controls (344.59,95.1) and (347.5,98.01) .. (347.5,101.6) .. controls (347.5,105.19) and (344.59,108.1) .. (341,108.1) .. controls (337.41,108.1) and (334.5,105.19) .. (334.5,101.6) -- cycle ;
		%Shape: Circle [id:dp04398431056568641] 
		\draw  [color={rgb, 255:red, 208; green, 2; blue, 27 }  ,draw opacity=1 ] (354.5,101.6) .. controls (354.5,98.01) and (357.41,95.1) .. (361,95.1) .. controls (364.59,95.1) and (367.5,98.01) .. (367.5,101.6) .. controls (367.5,105.19) and (364.59,108.1) .. (361,108.1) .. controls (357.41,108.1) and (354.5,105.19) .. (354.5,101.6) -- cycle ;
		%Shape: Circle [id:dp2054686794416627] 
		\draw  [color={rgb, 255:red, 208; green, 2; blue, 27 }  ,draw opacity=1 ] (335.5,167.6) .. controls (335.5,164.01) and (338.41,161.1) .. (342,161.1) .. controls (345.59,161.1) and (348.5,164.01) .. (348.5,167.6) .. controls (348.5,171.19) and (345.59,174.1) .. (342,174.1) .. controls (338.41,174.1) and (335.5,171.19) .. (335.5,167.6) -- cycle ;
		%Shape: Circle [id:dp3317955447358625] 
		\draw  [color={rgb, 255:red, 208; green, 2; blue, 27 }  ,draw opacity=1 ] (355.5,167.6) .. controls (355.5,164.01) and (358.41,161.1) .. (362,161.1) .. controls (365.59,161.1) and (368.5,164.01) .. (368.5,167.6) .. controls (368.5,171.19) and (365.59,174.1) .. (362,174.1) .. controls (358.41,174.1) and (355.5,171.19) .. (355.5,167.6) -- cycle ;

	\end{tikzpicture}
	\caption{A compact countable graph. The vertices circled in red form a finite dominating set.}
	\label{exm3}
\end{figure}

\begin{remark}
	\text{}
\begin{enumerate}
	
	\item Compact infinite graphs are not locally finite, since they admit a finite dominating set.
	
	\item A preconvergence space $\langle X,L\rangle$ is locally compact if, for every convergent net $\varphi\in\textsc{Nets}(X)$, there exists a compact set $C\in \varphi^\uparrow$, i.e., $C$ is compact with respect to the subspace preconvergence. Notice that every graph is locally compact as a convergence space. In fact, the filter induced by a convergent net contains the neighborhood set of a vertex, which is a compact set under the induced subgraph convergence.
	
\end{enumerate}
\end{remark}

In ~\cite{Das2017} Angsuman Das characterizes infinite graphs that admit a finite dominating set. Then, we obtain a characterization of compact graphs.

\begin{theorem}[{\cite[Corollary 3.1]{Das2017}}]
	
	\label{domin}
	An infinite graph $G$ with finitely many components has a finite dominating set if and only if each component of $G$ has a spanning tree with finitely many internal vertices, i.e, vertices with degree greater than or equal to two.
\end{theorem}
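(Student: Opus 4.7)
The plan is to argue both directions componentwise, since the hypothesis of finitely many components lets us pass freely between a dominating set for $G$ and dominating sets for each component, and similarly a spanning tree of $G$ decomposes into spanning trees of the components.

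For the forward direction, assume $G$ has a finite dominating set $D$. Fixing a component $C$, the trace $D_C = D \cap V(C)$ dominates $C$. If $C$ is finite, any spanning tree of $C$ has only finitely many vertices, hence finitely many internal ones. If $C$ is infinite, I would first construct a finite connected subgraph $T_0 \subseteq C$ containing $D_C$: since $C$ is connected, each pair of vertices in $D_C$ is joined by a path in $C$, and taking the union of finitely many such paths and then a spanning tree of this union yields a finite subtree $T_0$ with $D_C \subseteq V(T_0)$. For every vertex $v \in V(C) \setminus V(T_0)$, domination furnishes a neighbor $d(v) \in D_C \subseteq V(T_0)$, and I would attach the single edge $v\,d(v)$ to $T_0$. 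The resulting subgraph $T$ spans $C$, is connected (every new vertex is linked to $T_0$), and is acyclic (every new vertex has degree $1$ in $T$, hence cannot lie on a cycle). The internal vertices of $T$ are therefore contained in $V(T_0)$, which is finite.

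For the reverse direction, assume each component $C$ has a spanning tree $T_C$ whose set $I_C$ of internal vertices is finite. If $C$ is finite, the whole vertex set $V(C)$ is a finite dominating set of $C$. If $C$ is infinite, I claim $I_C$ already dominates $C$: every vertex not in $I_C$ is a leaf of $T_C$, so has a unique neighbor in $T_C$; this neighbor must lie in $I_C$, since otherwise two adjacent leaves would form a connected component of $T_C$ of size two, contradicting that $T_C$ is connected and infinite. The union $\bigcup_C (I_C \text{ or } V(C))$ over the finitely many components is then a finite dominating set of $G$.

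The only real obstacle is making sure the various degenerate cases do not spoil the argument, namely: $D_C$ could be empty for a component containing no vertex of $D$ (impossible since $D$ dominates $G$ and $C \neq \emptyset$); a spanning tree of an infinite component could a priori have $I_C = \emptyset$ (ruled out because a tree with $|V| \geq 3$ has an internal vertex, and an infinite connected tree has $|V| \geq 3$); and finite components must be handled separately in the reverse direction. All of these are immediate once isolated, so the substance of the proof is the construction of $T_0$ and the one-edge attachments in the forward direction.
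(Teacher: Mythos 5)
Your argument is correct. Note, however, that the paper does not prove this statement at all: it is imported verbatim from Das's paper \cite{Das2017}, so there is no in-paper proof to compare against. What you have supplied is a self-contained elementary proof, and both halves are sound. In the forward direction, the construction of the finite ``core'' tree $T_0$ spanning the trace $D_C$ of the dominating set, followed by attaching each remaining vertex by a single pendant edge to a dominating neighbour, does produce a spanning tree of the component (connectivity is clear, and acyclicity holds because every vertex outside $T_0$ has degree exactly one, so any cycle would lie inside the tree $T_0$), and its internal vertices are confined to the finite set $V(T_0)$. In the reverse direction, your observation that in an infinite tree the unique neighbour of a leaf must be internal (two adjacent leaves would form a two-vertex component) shows that the finite set $I_C$ dominates the infinite component, while finite components are handled by taking their whole vertex set; finiteness of the number of components then yields a finite dominating set for $G$. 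The degenerate cases you isolate ($D_C$ nonempty, $I_C$ nonempty for infinite components) are handled correctly, so your proof can stand in for the cited result.
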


\begin{remark}
In~\cite[Theorems 5.1 and 5.3]{Das2017}, Angsuman Das shows that if there exists a surjective graph homomorphism $f : G \to H$ and $G$ has a finite dominating set, then $H$ also has a finite dominating set. Moreover, he proves that the product of graphs each having a finite dominating set also admits a finite dominating set.

The proofs of these two results rely solely on combinatorial methods and can be simplified using the following:

\begin{itemize}
	\item Let $X$ and $Y$ be preconvergence spaces, where $X$ is compact. If there exists a surjective continuous function $f:X\to Y$, then $Y$ is compact. 
	\item The Tychonoff theorem holds for preconvergence spaces.
\end{itemize}
See \cite{BB} and \cite{dol} for proofs.

\end{remark}

When dealing with infinite graphs, we are interested not only in their vertices and edges, but also in their behavior ``at infinity.'' To capture this phenomenon, the notions of ends and edge-ends of an infinite graph are introduced. Our objective is to show that a compact graph has only finitely many edge-ends.

A ray $r$ in a graph $G$ is a sequence $\langle v_n\rangle_{n\in\mathbb{N}}$ of distinct vertices such that $v_nv_{n+1}\in E(G)$ for every $n\in\mathbb{N}$, and its infinite subpaths are called its \emph{tails}. Two rays $r$ and $s$ in a graph $G$ are \emph{equivalent} if for every finite set $F\subseteq V(G)$ there exists a path between tails of $r$ and $s$ in $G-F$ (see Figure~\ref{rays}), where $G-F$ denotes the subgraph with $V(G-F)=V(G)\setminus F$ and $E(G-F)=\{xy\in E(G): x\notin F \text{ and } y\notin F\}$. The corresponding equivalence classes of rays are the \emph{ends} of $G$, denoted by $\Omega(G)$. An end of an infinite graph is, intuitively, a direction in which the graph extends infinitely.

In 1931, Freudenthal~\cite{Freudenthal1931} published the article ``Uber die Enden topologischer R\"aume und Gruppen'', where he first introduced the concept of ends for topological spaces. Later, in 1964, Halin~\cite{Halin1964} introduced the concept of graph ends, motivated by problems in infinite networks and high connectivity. In the case of locally finite graphs, the ends of the graph coincide with the ends of the associated topological space obtained by viewing the graph as a 1-dimensional simplicial complex.
\begin{figure}[htb]
	\centering

	\tikzset{every picture/.style={line width=0.75pt}} %set default line width to 0.75pt        
	
	\begin{tikzpicture}[x=0.75pt,y=0.75pt,yscale=-1,xscale=1]
		%uncomment if require: \path (0,300); %set diagram left start at 0, and has height of 300
		
		%Straight Lines [id:da8727141281611867] 
		\draw    (95,120) -- (122,120.6) ;
		\draw [shift={(122,120.6)}, rotate = 1.27] [color={rgb, 255:red, 0; green, 0; blue, 0 }  ][fill={rgb, 255:red, 0; green, 0; blue, 0 }  ][line width=0.75]      (0, 0) circle [x radius= 3.35, y radius= 3.35]   ;
		\draw [shift={(95,120)}, rotate = 1.27] [color={rgb, 255:red, 0; green, 0; blue, 0 }  ][fill={rgb, 255:red, 0; green, 0; blue, 0 }  ][line width=0.75]      (0, 0) circle [x radius= 3.35, y radius= 3.35]   ;
		%Straight Lines [id:da8001471012429923] 
		\draw    (122,120.6) -- (149,121.2) ;
		\draw [shift={(149,121.2)}, rotate = 1.27] [color={rgb, 255:red, 0; green, 0; blue, 0 }  ][fill={rgb, 255:red, 0; green, 0; blue, 0 }  ][line width=0.75]      (0, 0) circle [x radius= 3.35, y radius= 3.35]   ;
		\draw [shift={(122,120.6)}, rotate = 1.27] [color={rgb, 255:red, 0; green, 0; blue, 0 }  ][fill={rgb, 255:red, 0; green, 0; blue, 0 }  ][line width=0.75]      (0, 0) circle [x radius= 3.35, y radius= 3.35]   ;
		%Straight Lines [id:da6433075463618727] 
		\draw    (149,121.2) -- (176,121.8) ;
		\draw [shift={(176,121.8)}, rotate = 1.27] [color={rgb, 255:red, 0; green, 0; blue, 0 }  ][fill={rgb, 255:red, 0; green, 0; blue, 0 }  ][line width=0.75]      (0, 0) circle [x radius= 3.35, y radius= 3.35]   ;
		\draw [shift={(149,121.2)}, rotate = 1.27] [color={rgb, 255:red, 0; green, 0; blue, 0 }  ][fill={rgb, 255:red, 0; green, 0; blue, 0 }  ][line width=0.75]      (0, 0) circle [x radius= 3.35, y radius= 3.35]   ;
		%Straight Lines [id:da15250103990359887] 
		\draw    (176,121.8) -- (203,122.4) ;
		\draw [shift={(203,122.4)}, rotate = 1.27] [color={rgb, 255:red, 0; green, 0; blue, 0 }  ][fill={rgb, 255:red, 0; green, 0; blue, 0 }  ][line width=0.75]      (0, 0) circle [x radius= 3.35, y radius= 3.35]   ;
		\draw [shift={(176,121.8)}, rotate = 1.27] [color={rgb, 255:red, 0; green, 0; blue, 0 }  ][fill={rgb, 255:red, 0; green, 0; blue, 0 }  ][line width=0.75]      (0, 0) circle [x radius= 3.35, y radius= 3.35]   ;
		%Straight Lines [id:da15118424073691994] 
		\draw    (203,122.4) -- (230,123) ;
		\draw [shift={(230,123)}, rotate = 1.27] [color={rgb, 255:red, 0; green, 0; blue, 0 }  ][fill={rgb, 255:red, 0; green, 0; blue, 0 }  ][line width=0.75]      (0, 0) circle [x radius= 3.35, y radius= 3.35]   ;
		\draw [shift={(203,122.4)}, rotate = 1.27] [color={rgb, 255:red, 0; green, 0; blue, 0 }  ][fill={rgb, 255:red, 0; green, 0; blue, 0 }  ][line width=0.75]      (0, 0) circle [x radius= 3.35, y radius= 3.35]   ;
		%Straight Lines [id:da3048659932376838] 
		\draw    (95,142) -- (122,142.6) ;
		\draw [shift={(122,142.6)}, rotate = 1.27] [color={rgb, 255:red, 0; green, 0; blue, 0 }  ][fill={rgb, 255:red, 0; green, 0; blue, 0 }  ][line width=0.75]      (0, 0) circle [x radius= 3.35, y radius= 3.35]   ;
		\draw [shift={(95,142)}, rotate = 1.27] [color={rgb, 255:red, 0; green, 0; blue, 0 }  ][fill={rgb, 255:red, 0; green, 0; blue, 0 }  ][line width=0.75]      (0, 0) circle [x radius= 3.35, y radius= 3.35]   ;
		%Straight Lines [id:da9190720527286846] 
		\draw    (122,142.6) -- (149,143.2) ;
		\draw [shift={(149,143.2)}, rotate = 1.27] [color={rgb, 255:red, 0; green, 0; blue, 0 }  ][fill={rgb, 255:red, 0; green, 0; blue, 0 }  ][line width=0.75]      (0, 0) circle [x radius= 3.35, y radius= 3.35]   ;
		\draw [shift={(122,142.6)}, rotate = 1.27] [color={rgb, 255:red, 0; green, 0; blue, 0 }  ][fill={rgb, 255:red, 0; green, 0; blue, 0 }  ][line width=0.75]      (0, 0) circle [x radius= 3.35, y radius= 3.35]   ;
		%Straight Lines [id:da6570097270639811] 
		\draw    (149,143.2) -- (176,143.8) ;
		\draw [shift={(176,143.8)}, rotate = 1.27] [color={rgb, 255:red, 0; green, 0; blue, 0 }  ][fill={rgb, 255:red, 0; green, 0; blue, 0 }  ][line width=0.75]      (0, 0) circle [x radius= 3.35, y radius= 3.35]   ;
		\draw [shift={(149,143.2)}, rotate = 1.27] [color={rgb, 255:red, 0; green, 0; blue, 0 }  ][fill={rgb, 255:red, 0; green, 0; blue, 0 }  ][line width=0.75]      (0, 0) circle [x radius= 3.35, y radius= 3.35]   ;
		%Straight Lines [id:da8594264534676406] 
		\draw    (176,143.8) -- (203,144.4) ;
		\draw [shift={(203,144.4)}, rotate = 1.27] [color={rgb, 255:red, 0; green, 0; blue, 0 }  ][fill={rgb, 255:red, 0; green, 0; blue, 0 }  ][line width=0.75]      (0, 0) circle [x radius= 3.35, y radius= 3.35]   ;
		\draw [shift={(176,143.8)}, rotate = 1.27] [color={rgb, 255:red, 0; green, 0; blue, 0 }  ][fill={rgb, 255:red, 0; green, 0; blue, 0 }  ][line width=0.75]      (0, 0) circle [x radius= 3.35, y radius= 3.35]   ;
		%Straight Lines [id:da11296229476596298] 
		\draw    (203,144.4) -- (230,145) ;
		\draw [shift={(230,145)}, rotate = 1.27] [color={rgb, 255:red, 0; green, 0; blue, 0 }  ][fill={rgb, 255:red, 0; green, 0; blue, 0 }  ][line width=0.75]      (0, 0) circle [x radius= 3.35, y radius= 3.35]   ;
		\draw [shift={(203,144.4)}, rotate = 1.27] [color={rgb, 255:red, 0; green, 0; blue, 0 }  ][fill={rgb, 255:red, 0; green, 0; blue, 0 }  ][line width=0.75]      (0, 0) circle [x radius= 3.35, y radius= 3.35]   ;
		%Straight Lines [id:da18904451674177303] 
		\draw    (95,120) -- (95,142) ;
		%Straight Lines [id:da4136781246274138] 
		\draw    (122,120.6) -- (122,142.6) ;
		%Straight Lines [id:da6250557939843855] 
		\draw    (149,121.2) -- (149,143.2) ;
		%Straight Lines [id:da4873388748517654] 
		\draw    (176,121.8) -- (176,143.8) ;
		%Straight Lines [id:da360985496540975] 
		\draw    (203,122.4) -- (203,144.4) ;
		%Straight Lines [id:da43148300100004355] 
		\draw  [dash pattern={on 0.84pt off 2.51pt}]  (240,123) -- (256,122.6) ;
		%Straight Lines [id:da666133612974044] 
		\draw  [dash pattern={on 0.84pt off 2.51pt}]  (240,146) -- (256,145.6) ;
		%Straight Lines [id:da0863062207202474] 
		\draw    (281,124) -- (308,124.6) ;
		\draw [shift={(308,124.6)}, rotate = 1.27] [color={rgb, 255:red, 0; green, 0; blue, 0 }  ][fill={rgb, 255:red, 0; green, 0; blue, 0 }  ][line width=0.75]      (0, 0) circle [x radius= 3.35, y radius= 3.35]   ;
		\draw [shift={(281,124)}, rotate = 1.27] [color={rgb, 255:red, 0; green, 0; blue, 0 }  ][fill={rgb, 255:red, 0; green, 0; blue, 0 }  ][line width=0.75]      (0, 0) circle [x radius= 3.35, y radius= 3.35]   ;
		%Straight Lines [id:da33544973462336236] 
		\draw    (308,124.6) -- (335,125.2) ;
		\draw [shift={(335,125.2)}, rotate = 1.27] [color={rgb, 255:red, 0; green, 0; blue, 0 }  ][fill={rgb, 255:red, 0; green, 0; blue, 0 }  ][line width=0.75]      (0, 0) circle [x radius= 3.35, y radius= 3.35]   ;
		\draw [shift={(308,124.6)}, rotate = 1.27] [color={rgb, 255:red, 0; green, 0; blue, 0 }  ][fill={rgb, 255:red, 0; green, 0; blue, 0 }  ][line width=0.75]      (0, 0) circle [x radius= 3.35, y radius= 3.35]   ;
		%Straight Lines [id:da4005183155829649] 
		\draw    (335,125.2) -- (362,125.8) ;
		\draw [shift={(362,125.8)}, rotate = 1.27] [color={rgb, 255:red, 0; green, 0; blue, 0 }  ][fill={rgb, 255:red, 0; green, 0; blue, 0 }  ][line width=0.75]      (0, 0) circle [x radius= 3.35, y radius= 3.35]   ;
		\draw [shift={(335,125.2)}, rotate = 1.27] [color={rgb, 255:red, 0; green, 0; blue, 0 }  ][fill={rgb, 255:red, 0; green, 0; blue, 0 }  ][line width=0.75]      (0, 0) circle [x radius= 3.35, y radius= 3.35]   ;
		%Straight Lines [id:da2893968194214813] 
		\draw    (362,125.8) -- (389,126.4) ;
		\draw [shift={(389,126.4)}, rotate = 1.27] [color={rgb, 255:red, 0; green, 0; blue, 0 }  ][fill={rgb, 255:red, 0; green, 0; blue, 0 }  ][line width=0.75]      (0, 0) circle [x radius= 3.35, y radius= 3.35]   ;
		\draw [shift={(362,125.8)}, rotate = 1.27] [color={rgb, 255:red, 0; green, 0; blue, 0 }  ][fill={rgb, 255:red, 0; green, 0; blue, 0 }  ][line width=0.75]      (0, 0) circle [x radius= 3.35, y radius= 3.35]   ;
		%Straight Lines [id:da05857648066647858] 
		\draw    (389,126.4) -- (416,127) ;
		\draw [shift={(416,127)}, rotate = 1.27] [color={rgb, 255:red, 0; green, 0; blue, 0 }  ][fill={rgb, 255:red, 0; green, 0; blue, 0 }  ][line width=0.75]      (0, 0) circle [x radius= 3.35, y radius= 3.35]   ;
		\draw [shift={(389,126.4)}, rotate = 1.27] [color={rgb, 255:red, 0; green, 0; blue, 0 }  ][fill={rgb, 255:red, 0; green, 0; blue, 0 }  ][line width=0.75]      (0, 0) circle [x radius= 3.35, y radius= 3.35]   ;
		%Straight Lines [id:da28492200938435164] 
		\draw    (281,159) -- (308,159.6) ;
		\draw [shift={(308,159.6)}, rotate = 1.27] [color={rgb, 255:red, 0; green, 0; blue, 0 }  ][fill={rgb, 255:red, 0; green, 0; blue, 0 }  ][line width=0.75]      (0, 0) circle [x radius= 3.35, y radius= 3.35]   ;
		\draw [shift={(281,159)}, rotate = 1.27] [color={rgb, 255:red, 0; green, 0; blue, 0 }  ][fill={rgb, 255:red, 0; green, 0; blue, 0 }  ][line width=0.75]      (0, 0) circle [x radius= 3.35, y radius= 3.35]   ;
		%Straight Lines [id:da5122691118927687] 
		\draw    (308,159.6) -- (335,160.2) ;
		\draw [shift={(335,160.2)}, rotate = 1.27] [color={rgb, 255:red, 0; green, 0; blue, 0 }  ][fill={rgb, 255:red, 0; green, 0; blue, 0 }  ][line width=0.75]      (0, 0) circle [x radius= 3.35, y radius= 3.35]   ;
		\draw [shift={(308,159.6)}, rotate = 1.27] [color={rgb, 255:red, 0; green, 0; blue, 0 }  ][fill={rgb, 255:red, 0; green, 0; blue, 0 }  ][line width=0.75]      (0, 0) circle [x radius= 3.35, y radius= 3.35]   ;
		%Straight Lines [id:da9167288697688896] 
		\draw    (335,160.2) -- (362,160.8) ;
		\draw [shift={(362,160.8)}, rotate = 1.27] [color={rgb, 255:red, 0; green, 0; blue, 0 }  ][fill={rgb, 255:red, 0; green, 0; blue, 0 }  ][line width=0.75]      (0, 0) circle [x radius= 3.35, y radius= 3.35]   ;
		\draw [shift={(335,160.2)}, rotate = 1.27] [color={rgb, 255:red, 0; green, 0; blue, 0 }  ][fill={rgb, 255:red, 0; green, 0; blue, 0 }  ][line width=0.75]      (0, 0) circle [x radius= 3.35, y radius= 3.35]   ;
		%Straight Lines [id:da5704369524077944] 
		\draw    (362,160.8) -- (389,161.4) ;
		\draw [shift={(389,161.4)}, rotate = 1.27] [color={rgb, 255:red, 0; green, 0; blue, 0 }  ][fill={rgb, 255:red, 0; green, 0; blue, 0 }  ][line width=0.75]      (0, 0) circle [x radius= 3.35, y radius= 3.35]   ;
		\draw [shift={(362,160.8)}, rotate = 1.27] [color={rgb, 255:red, 0; green, 0; blue, 0 }  ][fill={rgb, 255:red, 0; green, 0; blue, 0 }  ][line width=0.75]      (0, 0) circle [x radius= 3.35, y radius= 3.35]   ;
		%Straight Lines [id:da8118477721944023] 
		\draw    (389,161.4) -- (416,162) ;
		\draw [shift={(416,162)}, rotate = 1.27] [color={rgb, 255:red, 0; green, 0; blue, 0 }  ][fill={rgb, 255:red, 0; green, 0; blue, 0 }  ][line width=0.75]      (0, 0) circle [x radius= 3.35, y radius= 3.35]   ;
		\draw [shift={(389,161.4)}, rotate = 1.27] [color={rgb, 255:red, 0; green, 0; blue, 0 }  ][fill={rgb, 255:red, 0; green, 0; blue, 0 }  ][line width=0.75]      (0, 0) circle [x radius= 3.35, y radius= 3.35]   ;
		%Straight Lines [id:da17758013240523496] 
		\draw  [dash pattern={on 0.84pt off 2.51pt}]  (426,127) -- (442,126.6) ;
		%Straight Lines [id:da35041358868071426] 
		\draw  [dash pattern={on 0.84pt off 2.51pt}]  (426,163) -- (442,162.6) ;
		%Straight Lines [id:da1117954509091732] 
		\draw    (281,124) -- (349,144.6) ;
		\draw [shift={(349,144.6)}, rotate = 16.85] [color={rgb, 255:red, 0; green, 0; blue, 0 }  ][fill={rgb, 255:red, 0; green, 0; blue, 0 }  ][line width=0.75]      (0, 0) circle [x radius= 3.35, y radius= 3.35]   ;
		\draw [shift={(281,124)}, rotate = 16.85] [color={rgb, 255:red, 0; green, 0; blue, 0 }  ][fill={rgb, 255:red, 0; green, 0; blue, 0 }  ][line width=0.75]      (0, 0) circle [x radius= 3.35, y radius= 3.35]   ;
		%Straight Lines [id:da7208914353686336] 
		\draw    (308,124.6) -- (349,144.6) ;
		\draw [shift={(349,144.6)}, rotate = 26] [color={rgb, 255:red, 0; green, 0; blue, 0 }  ][fill={rgb, 255:red, 0; green, 0; blue, 0 }  ][line width=0.75]      (0, 0) circle [x radius= 3.35, y radius= 3.35]   ;
		\draw [shift={(308,124.6)}, rotate = 26] [color={rgb, 255:red, 0; green, 0; blue, 0 }  ][fill={rgb, 255:red, 0; green, 0; blue, 0 }  ][line width=0.75]      (0, 0) circle [x radius= 3.35, y radius= 3.35]   ;
		%Straight Lines [id:da3508105104954353] 
		\draw    (335,125.2) -- (349,144.6) ;
		\draw [shift={(349,144.6)}, rotate = 54.18] [color={rgb, 255:red, 0; green, 0; blue, 0 }  ][fill={rgb, 255:red, 0; green, 0; blue, 0 }  ][line width=0.75]      (0, 0) circle [x radius= 3.35, y radius= 3.35]   ;
		\draw [shift={(335,125.2)}, rotate = 54.18] [color={rgb, 255:red, 0; green, 0; blue, 0 }  ][fill={rgb, 255:red, 0; green, 0; blue, 0 }  ][line width=0.75]      (0, 0) circle [x radius= 3.35, y radius= 3.35]   ;
		%Straight Lines [id:da2660047292940054] 
		\draw    (362,125.8) -- (349,144.6) ;
		\draw [shift={(349,144.6)}, rotate = 124.66] [color={rgb, 255:red, 0; green, 0; blue, 0 }  ][fill={rgb, 255:red, 0; green, 0; blue, 0 }  ][line width=0.75]      (0, 0) circle [x radius= 3.35, y radius= 3.35]   ;
		\draw [shift={(362,125.8)}, rotate = 124.66] [color={rgb, 255:red, 0; green, 0; blue, 0 }  ][fill={rgb, 255:red, 0; green, 0; blue, 0 }  ][line width=0.75]      (0, 0) circle [x radius= 3.35, y radius= 3.35]   ;
		%Straight Lines [id:da13543790612427198] 
		\draw    (389,126.4) -- (349,144.6) ;
		\draw [shift={(349,144.6)}, rotate = 155.53] [color={rgb, 255:red, 0; green, 0; blue, 0 }  ][fill={rgb, 255:red, 0; green, 0; blue, 0 }  ][line width=0.75]      (0, 0) circle [x radius= 3.35, y radius= 3.35]   ;
		\draw [shift={(389,126.4)}, rotate = 155.53] [color={rgb, 255:red, 0; green, 0; blue, 0 }  ][fill={rgb, 255:red, 0; green, 0; blue, 0 }  ][line width=0.75]      (0, 0) circle [x radius= 3.35, y radius= 3.35]   ;
		%Straight Lines [id:da29465178604533704] 
		\draw    (416,127) -- (349,144.6) ;
		\draw [shift={(349,144.6)}, rotate = 165.28] [color={rgb, 255:red, 0; green, 0; blue, 0 }  ][fill={rgb, 255:red, 0; green, 0; blue, 0 }  ][line width=0.75]      (0, 0) circle [x radius= 3.35, y radius= 3.35]   ;
		\draw [shift={(416,127)}, rotate = 165.28] [color={rgb, 255:red, 0; green, 0; blue, 0 }  ][fill={rgb, 255:red, 0; green, 0; blue, 0 }  ][line width=0.75]      (0, 0) circle [x radius= 3.35, y radius= 3.35]   ;
		%Straight Lines [id:da5617497799129033] 
		\draw    (281,159) -- (349,144.6) ;
		\draw [shift={(349,144.6)}, rotate = 348.04] [color={rgb, 255:red, 0; green, 0; blue, 0 }  ][fill={rgb, 255:red, 0; green, 0; blue, 0 }  ][line width=0.75]      (0, 0) circle [x radius= 3.35, y radius= 3.35]   ;
		\draw [shift={(281,159)}, rotate = 348.04] [color={rgb, 255:red, 0; green, 0; blue, 0 }  ][fill={rgb, 255:red, 0; green, 0; blue, 0 }  ][line width=0.75]      (0, 0) circle [x radius= 3.35, y radius= 3.35]   ;
		%Straight Lines [id:da9891287283519391] 
		\draw    (308,159.6) -- (349,144.6) ;
		\draw [shift={(349,144.6)}, rotate = 339.9] [color={rgb, 255:red, 0; green, 0; blue, 0 }  ][fill={rgb, 255:red, 0; green, 0; blue, 0 }  ][line width=0.75]      (0, 0) circle [x radius= 3.35, y radius= 3.35]   ;
		\draw [shift={(308,159.6)}, rotate = 339.9] [color={rgb, 255:red, 0; green, 0; blue, 0 }  ][fill={rgb, 255:red, 0; green, 0; blue, 0 }  ][line width=0.75]      (0, 0) circle [x radius= 3.35, y radius= 3.35]   ;
		%Straight Lines [id:da8394976317066762] 
		\draw    (335,160.2) -- (349,144.6) ;
		\draw [shift={(349,144.6)}, rotate = 311.91] [color={rgb, 255:red, 0; green, 0; blue, 0 }  ][fill={rgb, 255:red, 0; green, 0; blue, 0 }  ][line width=0.75]      (0, 0) circle [x radius= 3.35, y radius= 3.35]   ;
		\draw [shift={(335,160.2)}, rotate = 311.91] [color={rgb, 255:red, 0; green, 0; blue, 0 }  ][fill={rgb, 255:red, 0; green, 0; blue, 0 }  ][line width=0.75]      (0, 0) circle [x radius= 3.35, y radius= 3.35]   ;
		%Straight Lines [id:da8853866663987954] 
		\draw    (362,160.8) -- (349,144.6) ;
		\draw [shift={(349,144.6)}, rotate = 231.25] [color={rgb, 255:red, 0; green, 0; blue, 0 }  ][fill={rgb, 255:red, 0; green, 0; blue, 0 }  ][line width=0.75]      (0, 0) circle [x radius= 3.35, y radius= 3.35]   ;
		\draw [shift={(362,160.8)}, rotate = 231.25] [color={rgb, 255:red, 0; green, 0; blue, 0 }  ][fill={rgb, 255:red, 0; green, 0; blue, 0 }  ][line width=0.75]      (0, 0) circle [x radius= 3.35, y radius= 3.35]   ;
		%Straight Lines [id:da028867966321445393] 
		\draw    (349,144.6) -- (389,161.4) ;
		\draw [shift={(389,161.4)}, rotate = 22.78] [color={rgb, 255:red, 0; green, 0; blue, 0 }  ][fill={rgb, 255:red, 0; green, 0; blue, 0 }  ][line width=0.75]      (0, 0) circle [x radius= 3.35, y radius= 3.35]   ;
		\draw [shift={(349,144.6)}, rotate = 22.78] [color={rgb, 255:red, 0; green, 0; blue, 0 }  ][fill={rgb, 255:red, 0; green, 0; blue, 0 }  ][line width=0.75]      (0, 0) circle [x radius= 3.35, y radius= 3.35]   ;
		%Straight Lines [id:da5113585474819325] 
		\draw    (349,144.6) -- (416,162) ;
		\draw [shift={(416,162)}, rotate = 14.56] [color={rgb, 255:red, 0; green, 0; blue, 0 }  ][fill={rgb, 255:red, 0; green, 0; blue, 0 }  ][line width=0.75]      (0, 0) circle [x radius= 3.35, y radius= 3.35]   ;
		\draw [shift={(349,144.6)}, rotate = 14.56] [color={rgb, 255:red, 0; green, 0; blue, 0 }  ][fill={rgb, 255:red, 0; green, 0; blue, 0 }  ][line width=0.75]      (0, 0) circle [x radius= 3.35, y radius= 3.35]   ;
		%Straight Lines [id:da4801477212948606] 
		\draw  [dash pattern={on 0.84pt off 2.51pt}]  (240,135) -- (256,134.6) ;
		%Straight Lines [id:da903867769955977] 
		\draw  [dash pattern={on 0.84pt off 2.51pt}]  (426,145) -- (442,144.6) ;

	\end{tikzpicture}
	\label{rays}
	\caption{The rays on the left are equivalent, whereas those on the right are not.}
\end{figure}

When edge-connectivity is taken as the central notion, one is naturally led to the concept of edge-end spaces. 
Two rays $r$ and $s$ in a graph $G$ are said to be \emph{edge-equivalent} if for every finite set $F\subseteq E(G)$ there exists a path connecting tails of $r$ and $s$ in $G-F$, where $G-F$ denotes the subgraph with $V(G-F)=V(G)$ and $E(G-F)=E(G)\setminus F$. The corresponding equivalence classes of rays are called the \emph{edge-ends} of $G$, denoted by $\Omega_E(G)$.

Note that the rays on the right in Figure~\ref{rays} are edge-equivalent, although they are not equivalent under the vertex-based definition of ends. Edge-end spaces were originally introduced in~\cite{Hahn1997} and have recently been studied in~\cite{aurichi2024topologicalremarksendedgeend} and~\cite{boska2025edgedirectioncompactedgeendspaces}.

\begin{corollary}
	\label{edge}
	The edge-end space $\Omega_E(G)$ of a compact graph $G$ is finite.
\end{corollary}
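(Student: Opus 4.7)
The plan is to use the finite dominating set provided by Theorem~\ref{teocomp2} and show directly that $|\Omega_E(G)| \leq |D|$. By compactness, fix a finite dominating set $D = \{v_1, \dots, v_n\} \subseteq V(G)$, so that $V(G) = N[D] = D \cup \bigcup_{i=1}^n N(v_i)$. The strategy rests on two observations: every ray concentrates, infinitely often, around some $v_i \in D$, and two rays that concentrate around the same $v_i$ are automatically edge-equivalent. Combining these gives a surjection from a set of size at most $n$ onto $\Omega_E(G)$.

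For the first observation, let $r = \langle u_k \rangle_k$ be a ray. Since the $u_k$ are distinct and $|D| < \infty$, only finitely many vertices of $r$ lie in $D$; all remaining vertices lie in the finite union $\bigcup_{i=1}^n N(v_i)$. By pigeonhole, there exists $v_{i(r)} \in D$ such that infinitely many $u_k$ belong to $N(v_{i(r)})$.

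For the second observation, suppose $r$ and $r'$ are rays with $v_{i(r)} = v_{i(r')} = v$, and let $F \subseteq E(G)$ be a finite set of edges. At most $|F|$ edges incident to $v$ lie in $F$, so infinitely many neighbors of $v$ remain adjacent to $v$ in $G - F$. Since $r$ has infinitely many vertices in $N(v)$, we may pick $u \in V(r) \cap N(v)$ with $uv \notin F$, and similarly $u' \in V(r') \cap N(v)$ with $u'v \notin F$. Then $u\,v\,u'$ is a path in $G - F$ joining $r$ and $r'$, proving that $r$ and $r'$ are edge-equivalent.

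Putting the pieces together, define $A_i = \{[r] \in \Omega_E(G) : r$ has infinitely many vertices in $N(v_i)\}$ for each $i \leq n$. The first observation gives $\Omega_E(G) = \bigcup_{i=1}^n A_i$, while the second shows that each $A_i$ consists of at most one equivalence class. Hence $|\Omega_E(G)| \leq n < \infty$. The main obstacle in executing the plan is the second observation: one needs to notice that a finite edge deletion can only disconnect finitely many neighbors from a single anchor vertex, which is precisely what makes the infinite ``concentration'' around $v$ robust enough to produce paths in $G - F$. The rest is routine counting.
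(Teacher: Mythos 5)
Your proposal is correct and follows essentially the same route as the paper's proof: invoke Theorem~\ref{teocomp2} to get a finite dominating set $D$, use pigeonhole to anchor every ray at some vertex of $D$, and note that two rays anchored at the same vertex are edge-equivalent, yielding $|\Omega_E(G)|\leq |D|$. You merely phrase it as a direct counting argument (and spell out why a finite edge set cannot block the connection through the common anchor), whereas the paper argues by contradiction and leaves that last step implicit; the bound $|\Omega_E(G)|\leq |D|$ you obtain is exactly what the paper records in the remark following the corollary.
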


\begin{proof}
	By Theorem~\ref{teocomp2}, there exists a finite dominating set
	$D\subseteq V(G)$. Suppose for a contradiction that $G$ has infinitely
	many edge-ends. Choose more than $|D|$ distinct edge-ends and take a
	ray from each of them.
	
	Since $D$ is dominating, every vertex of a ray has a neighbour in $D$.
	As each ray contains infinitely many vertices and $D$ is finite, there
	exists a vertex $v\in D$ adjacent to infinitely many vertices of the ray.
	
	Because the number of chosen rays exceeds $|D|$, two of them,
	say $r$ and $s$, have infinitely many vertices adjacent to the
	same vertex $v\in D$.
	
	Let $F$ be a finite set of edges. Since $v$ is adjacent to infinitely
	many vertices of each ray, we can choose vertices $x\in r$ and
	$y\in s$ such that the edges $vx$ and $vy$ do not belong to $F$.
	Then the path $xvy$ lies in $G-F$. In particular, the tails of
	$r$ and $s$ in $G-F$ lie in the same component.
	
	This contradicts the choice of the rays from distinct edge-ends.
	Therefore $\Omega_E(G)$ is finite.
\end{proof}

The Figure \ref{exm} shows that Corollary \ref{edge} fails when considering the end space.

\begin{figure}[h]
	\centering

	\tikzset{every picture/.style={line width=0.75pt}} %set default line width to 0.75pt        
	
	\begin{tikzpicture}[x=0.75pt,y=0.75pt,yscale=-1,xscale=1]
		%uncomment if require: \path (0,300); %set diagram left start at 0, and has height of 300
		
		%Straight Lines [id:da8356911739989167] 
		\draw    (100,130) -- (85,145.6) ;
		\draw [shift={(85,145.6)}, rotate = 133.88] [color={rgb, 255:red, 0; green, 0; blue, 0 }  ][fill={rgb, 255:red, 0; green, 0; blue, 0 }  ][line width=0.75]      (0, 0) circle [x radius= 3.35, y radius= 3.35]   ;
		\draw [shift={(100,130)}, rotate = 133.88] [color={rgb, 255:red, 0; green, 0; blue, 0 }  ][fill={rgb, 255:red, 0; green, 0; blue, 0 }  ][line width=0.75]      (0, 0) circle [x radius= 3.35, y radius= 3.35]   ;
		%Straight Lines [id:da044815456716679636] 
		\draw    (115,114.4) -- (100,130) ;
		\draw [shift={(100,130)}, rotate = 133.88] [color={rgb, 255:red, 0; green, 0; blue, 0 }  ][fill={rgb, 255:red, 0; green, 0; blue, 0 }  ][line width=0.75]      (0, 0) circle [x radius= 3.35, y radius= 3.35]   ;
		\draw [shift={(115,114.4)}, rotate = 133.88] [color={rgb, 255:red, 0; green, 0; blue, 0 }  ][fill={rgb, 255:red, 0; green, 0; blue, 0 }  ][line width=0.75]      (0, 0) c
		ircle [x radius= 3.35, y radius= 3.35]   ;
		%Straight Lines [id:da047626377261201336] 
		\draw    (120,150) -- (105,165.6) ;
		\draw [shift={(105,165.6)}, rotate = 133.88] [color={rgb, 255:red, 0; green, 0; blue, 0 }  ][fill={rgb, 255:red, 0; green, 0; blue, 0 }  ][line width=0.75]      (0, 0) circle [x radius= 3.35, y radius= 3.35]   ;
		\draw [shift={(120,150)}, rotate = 133.88] [color={rgb, 255:red, 0; green, 0; blue, 0 }  ][fill={rgb, 255:red, 0; green, 0; blue, 0 }  ][line width=0.75]      (0, 0) circle [x radius= 3.35, y radius= 3.35]   ;
		%Straight Lines [id:da09086458621493165] 
		\draw    (135,134.4) -- (120,150) ;
		\draw [shift={(120,150)}, rotate = 133.88] [color={rgb, 255:red, 0; green, 0; blue, 0 }  ][fill={rgb, 255:red, 0; green, 0; blue, 0 }  ][line width=0.75]      (0, 0) circle [x radius= 3.35, y radius= 3.35]   ;
		\draw [shift={(135,134.4)}, rotate = 133.88] [color={rgb, 255:red, 0; green, 0; blue, 0 }  ][fill={rgb, 255:red, 0; green, 0; blue, 0 }  ][line width=0.75]      (0, 0) circle [x radius= 3.35, y radius= 3.35]   ;
		%Straight Lines [id:da6504558456314057] 
		\draw    (145,168) -- (130,183.6) ;
		\draw [shift={(130,183.6)}, rotate = 133.88] [color={rgb, 255:red, 0; green, 0; blue, 0 }  ][fill={rgb, 255:red, 0; green, 0; blue, 0 }  ][line width=0.75]      (0, 0) circle [x radius= 3.35, y radius= 3.35]   ;
		\draw [shift={(145,168)}, rotate = 133.88] [color={rgb, 255:red, 0; green, 0; blue, 0 }  ][fill={rgb, 255:red, 0; green, 0; blue, 0 }  ][line width=0.75]      (0, 0) circle [x radius= 3.35, y radius= 3.35]   ;
		%Straight Lines [id:da8338260911282941] 
		\draw    (160,152.4) -- (145,168) ;
		\draw [shift={(145,168)}, rotate = 133.88] [color={rgb, 255:red, 0; green, 0; blue, 0 }  ][fill={rgb, 255:red, 0; green, 0; blue, 0 }  ][line width=0.75]      (0, 0) circle [x radius= 3.35, y radius= 3.35]   ;
		\draw [shift={(160,152.4)}, rotate = 133.88] [color={rgb, 255:red, 0; green, 0; blue, 0 }  ][fill={rgb, 255:red, 0; green, 0; blue, 0 }  ][line width=0.75]      (0, 0) circle [x radius= 3.35, y radius= 3.35]   ;
		%Straight Lines [id:da9140339308213582] 
		\draw    (165,188) -- (150,203.6) ;
		\draw [shift={(150,203.6)}, rotate = 133.88] [color={rgb, 255:red, 0; green, 0; blue, 0 }  ][fill={rgb, 255:red, 0; green, 0; blue, 0 }  ][line width=0.75]      (0, 0) circle [x radius= 3.35, y radius= 3.35]   ;
		\draw [shift={(165,188)}, rotate = 133.88] [color={rgb, 255:red, 0; green, 0; blue, 0 }  ][fill={rgb, 255:red, 0; green, 0; blue, 0 }  ][line width=0.75]      (0, 0) circle [x radius= 3.35, y radius= 3.35]   ;
		%Straight Lines [id:da43930158934904306] 
		\draw    (180,172.4) -- (165,188) ;
		\draw [shift={(165,188)}, rotate = 133.88] [color={rgb, 255:red, 0; green, 0; blue, 0 }  ][fill={rgb, 255:red, 0; green, 0; blue, 0 }  ][line width=0.75]      (0, 0) circle [x radius= 3.35, y radius= 3.35]   ;
		\draw [shift={(180,172.4)}, rotate = 133.88] [color={rgb, 255:red, 0; green, 0; blue, 0 }  ][fill={rgb, 255:red, 0; green, 0; blue, 0 }  ][line width=0.75]      (0, 0) circle [x radius= 3.35, y radius= 3.35]   ;
		%Curve Lines [id:da17758075667872963] 
		\draw    (85,145.6) .. controls (68,171.6) and (71,189.6) .. (95,201.6) ;
		\draw [shift={(95,201.6)}, rotate = 26.57] [color={rgb, 255:red, 0; green, 0; blue, 0 }  ][fill={rgb, 255:red, 0; green, 0; blue, 0 }  ][line width=0.75]      (0, 0) circle [x radius= 3.35, y radius= 3.35]   ;
		\draw [shift={(85,145.6)}, rotate = 123.18] [color={rgb, 255:red, 0; green, 0; blue, 0 }  ][fill={rgb, 255:red, 0; green, 0; blue, 0 }  ][line width=0.75]      (0, 0) circle [x radius= 3.35, y radius= 3.35]   ;
		%Curve Lines [id:da568240113633904] 
		\draw    (105,165.6) .. controls (118,171.6) and (107,188.6) .. (95,201.6) ;
		\draw [shift={(95,201.6)}, rotate = 132.71] [color={rgb, 255:red, 0; green, 0; blue, 0 }  ][fill={rgb, 255:red, 0; green, 0; blue, 0 }  ][line width=0.75]      (0, 0) circle [x radius= 3.35, y radius= 3.35]   ;
		\draw [shift={(105,165.6)}, rotate = 24.78] [color={rgb, 255:red, 0; green, 0; blue, 0 }  ][fill={rgb, 255:red, 0; green, 0; blue, 0 }  ][line width=0.75]      (0, 0) circle [x radius= 3.35, y radius= 3.35]   ;
		%Curve Lines [id:da18822620582250682] 
		\draw    (130,183.6) .. controls (131,187.6) and (124,201.6) .. (95,201.6) ;
		\draw [shift={(95,201.6)}, rotate = 180] [color={rgb, 255:red, 0; green, 0; blue, 0 }  ][fill={rgb, 255:red, 0; green, 0; blue, 0 }  ][line width=0.75]      (0, 0) circle [x radius= 3.35, y radius= 3.35]   ;
		\draw [shift={(130,183.6)}, rotate = 75.96] [color={rgb, 255:red, 0; green, 0; blue, 0 }  ][fill={rgb, 255:red, 0; green, 0; blue, 0 }  ][line width=0.75]      (0, 0) circle [x radius= 3.35, y radius= 3.35]   ;
		%Curve Lines [id:da4427134993418915] 
		\draw    (150,203.6) .. controls (151,206.6) and (137,232.6) .. (95,201.6) ;
		\draw [shift={(95,201.6)}, rotate = 216.43] [color={rgb, 255:red, 0; green, 0; blue, 0 }  ][fill={rgb, 255:red, 0; green, 0; blue, 0 }  ][line width=0.75]      (0, 0) circle [x radius= 3.35, y radius= 3.35]   ;
		\draw [shift={(150,203.6)}, rotate = 71.57] [color={rgb, 255:red, 0; green, 0; blue, 0 }  ][fill={rgb, 255:red, 0; green, 0; blue, 0 }  ][line width=0.75]      (0, 0) circle [x radius= 3.35, y radius= 3.35]   ;
		%Curve Lines [id:da9167334554154459] 
		\draw    (100,130) .. controls (87,156.6) and (77,170.6) .. (95,201.6) ;
		\draw [shift={(95,201.6)}, rotate = 59.86] [color={rgb, 255:red, 0; green, 0; blue, 0 }  ][fill={rgb, 255:red, 0; green, 0; blue, 0 }  ][line width=0.75]      (0, 0) circle [x radius= 3.35, y radius= 3.35]   ;
		\draw [shift={(100,130)}, rotate = 116.05] [color={rgb, 255:red, 0; green, 0; blue, 0 }  ][fill={rgb, 255:red, 0; green, 0; blue, 0 }  ][line width=0.75]      (0, 0) circle [x radius= 3.35, y radius= 3.35]   ;
		%Curve Lines [id:da6158449243450337] 
		\draw    (120,150) .. controls (132,175.6) and (110,190.6) .. (95,201.6) ;
		\draw [shift={(95,201.6)}, rotate = 143.75] [color={rgb, 255:red, 0; green, 0; blue, 0 }  ][fill={rgb, 255:red, 0; green, 0; blue, 0 }  ][line width=0.75]      (0, 0) circle [x radius= 3.35, y radius= 3.35]   ;
		\draw [shift={(120,150)}, rotate = 64.89] [color={rgb, 255:red, 0; green, 0; blue, 0 }  ][fill={rgb, 255:red, 0; green, 0; blue, 0 }  ][line width=0.75]      (0, 0) circle [x radius= 3.35, y radius= 3.35]   ;
		%Curve Lines [id:da3709120198454232] 
		\draw    (145,168) .. controls (142,165.6) and (153,212.6) .. (95,201.6) ;
		\draw [shift={(95,201.6)}, rotate = 190.74] [color={rgb, 255:red, 0; green, 0; blue, 0 }  ][fill={rgb, 255:red, 0; green, 0; blue, 0 }  ][line width=0.75]      (0, 0) circle [x radius= 3.35, y radius= 3.35]   ;
		\draw [shift={(145,168)}, rotate = 218.66] [color={rgb, 255:red, 0; green, 0; blue, 0 }  ][fill={rgb, 255:red, 0; green, 0; blue, 0 }  ][line width=0.75]      (0, 0) circle [x radius= 3.35, y radius= 3.35]   ;
		%Curve Lines [id:da08229272679000044] 
		\draw    (165,188) .. controls (172,190.6) and (182,235.6) .. (95,201.6) ;
		\draw [shift={(95,201.6)}, rotate = 201.35] [color={rgb, 255:red, 0; green, 0; blue, 0 }  ][fill={rgb, 255:red, 0; green, 0; blue, 0 }  ][line width=0.75]      (0, 0) circle [x radius= 3.35, y radius= 3.35]   ;
		\draw [shift={(165,188)}, rotate = 20.38] [color={rgb, 255:red, 0; green, 0; blue, 0 }  ][fill={rgb, 255:red, 0; green, 0; blue, 0 }  ][line width=0.75]      (0, 0) circle [x radius= 3.35, y radius= 3.35]   ;
		%Curve Lines [id:da017906763598651176] 
		\draw    (115,114.4) .. controls (121,117.6) and (88,128.6) .. (95,201.6) ;
		\draw [shift={(95,201.6)}, rotate = 84.52] [color={rgb, 255:red, 0; green, 0; blue, 0 }  ][fill={rgb, 255:red, 0; green, 0; blue, 0 }  ][line width=0.75]      (0, 0) circle [x radius= 3.35, y radius= 3.35]   ;
		\draw [shift={(115,114.4)}, rotate = 28.07] [color={rgb, 255:red, 0; green, 0; blue, 0 }  ][fill={rgb, 255:red, 0; green, 0; blue, 0 }  ][line width=0.75]      (0, 0) circle [x radius= 3.35, y radius= 3.35]   ;
		%Curve Lines [id:da4501267129760992] 
		\draw    (135,134.4) .. controls (137,167.6) and (137,173.6) .. (95,201.6) ;
		\draw [shift={(95,201.6)}, rotate = 146.31] [color={rgb, 255:red, 0; green, 0; blue, 0 }  ][fill={rgb, 255:red, 0; green, 0; blue, 0 }  ][line width=0.75]      (0, 0) circle [x radius= 3.35, y radius= 3.35]   ;
		\draw [shift={(135,134.4)}, rotate = 86.55] [color={rgb, 255:red, 0; green, 0; blue, 0 }  ][fill={rgb, 255:red, 0; green, 0; blue, 0 }  ][line width=0.75]      (0, 0) circle [x radius= 3.35, y radius= 3.35]   ;
		%Curve Lines [id:da6581926347228058] 
		\draw    (160,152.4) .. controls (160,203.6) and (128,199.6) .. (95,201.6) ;
		\draw [shift={(95,201.6)}, rotate = 176.53] [color={rgb, 255:red, 0; green, 0; blue, 0 }  ][fill={rgb, 255:red, 0; green, 0; blue, 0 }  ][line width=0.75]      (0, 0) circle [x radius= 3.35, y radius= 3.35]   ;
		\draw [shift={(160,152.4)}, rotate = 90] [color={rgb, 255:red, 0; green, 0; blue, 0 }  ][fill={rgb, 255:red, 0; green, 0; blue, 0 }  ][line width=0.75]      (0, 0) circle [x radius= 3.35, y radius= 3.35]   ;
		%Curve Lines [id:da042551522728572544] 
		\draw    (180,172.4) .. controls (187,188.6) and (198,239.6) .. (95,201.6) ;
		\draw [shift={(95,201.6)}, rotate = 200.25] [color={rgb, 255:red, 0; green, 0; blue, 0 }  ][fill={rgb, 255:red, 0; green, 0; blue, 0 }  ][line width=0.75]      (0, 0) circle [x radius= 3.35, y radius= 3.35]   ;
		\draw [shift={(180,172.4)}, rotate = 66.63] [color={rgb, 255:red, 0; green, 0; blue, 0 }  ][fill={rgb, 255:red, 0; green, 0; blue, 0 }  ][line width=0.75]      (0, 0) circle [x radius= 3.35, y radius= 3.35]   ;
		%Straight Lines [id:da8712903328709051] 
		\draw  [dash pattern={on 0.84pt off 2.51pt}]  (121,109) -- (128,101.6) ;
		%Straight Lines [id:da5321661633792906] 
		\draw  [dash pattern={on 0.84pt off 2.51pt}]  (141,129) -- (148,121.6) ;
		%Straight Lines [id:da9728043435941468] 
		\draw  [dash pattern={on 0.84pt off 2.51pt}]  (165,147) -- (172,139.6) ;
		%Straight Lines [id:da8836963039314755] 
		\draw  [dash pattern={on 0.84pt off 2.51pt}]  (187,164) -- (194,156.6) ;
		%Straight Lines [id:da4887219286848741] 
		\draw  [dash pattern={on 0.84pt off 2.51pt}]  (186,208) -- (196,215.6) ;

	\end{tikzpicture}

	\caption{A compact graph with infinitely many ends. }
		\label{exm}
\end{figure}
\begin{remark}
	\text{}
	\begin{enumerate}
\item 	The proof of Corollary~\ref{edge} establishes more than the finiteness of the edge-end space. Indeed, we obtain the bound $|\Omega_E(G)| \leq |D|$, where $D$ is a finite dominating set.
	
\item 	The Figure~\ref{rays} shows that the converse of Corollary~\ref{edge} fails.
\end{enumerate}
\end{remark}

\begin{corollary}
	Let $G$ be a graph. If $G$ is compact, then $G$ has a rayless spanning tree.
	\label{rayless}

	\end{corollary}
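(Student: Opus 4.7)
The plan is to chain the two characterizations immediately preceding the corollary: Theorem~\ref{teocomp2}, which converts compactness into the combinatorial statement that $G$ has a finite dominating set, and Theorem~\ref{domin}, which in turn converts the existence of a finite dominating set into the existence of a spanning tree with only finitely many internal vertices. The final step is then the observation that a tree with finitely many internal vertices cannot contain a ray.

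More concretely, first I would dispatch the trivial case in which $V(G)$ is finite: any spanning tree of $G$ then has finitely many vertices, so it contains no ray. Assume now that $G$ is infinite. Since $G$ is compact as a convergence space, Theorem~\ref{teocomp2} supplies a finite dominating set $D\subseteq V(G)$. Because $G$ is connected, it has exactly one component, so the hypothesis ``finitely many components'' of Theorem~\ref{domin} is satisfied. Applying Theorem~\ref{domin} yields a spanning tree $T$ of $G$ whose set $I(T)$ of internal vertices, that is, vertices of degree at least $2$ in $T$, is finite.

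It remains to argue that $T$ is rayless. Suppose toward a contradiction that $T$ contains a ray $\langle v_n\rangle_{n\in\mathbb{N}}$. For every $n\geq 1$, the vertex $v_n$ is adjacent in $T$ to both $v_{n-1}$ and $v_{n+1}$, so $v_n$ has degree at least $2$ in $T$ and hence belongs to $I(T)$. Since the $v_n$ are pairwise distinct by the definition of a ray, $I(T)$ would be infinite, contradicting what was obtained from Theorem~\ref{domin}. Therefore $T$ is a rayless spanning tree of $G$.

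There is no real obstacle here: the work has been done by Theorems~\ref{teocomp2} and~\ref{domin}, and the only subtle point worth mentioning explicitly is that a ray in a tree forces infinitely many vertices of degree $\geq 2$, which is what turns ``finitely many internal vertices'' into ``rayless''.
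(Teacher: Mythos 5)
Your proposal is correct and follows essentially the same route as the paper: compactness yields a finite dominating set via Theorem~\ref{teocomp2}, Theorem~\ref{domin} then gives a spanning tree with finitely many internal vertices, and a ray would force infinitely many vertices of degree at least $2$. Your explicit treatment of the finite case and of why a ray produces infinitely many internal vertices only makes the argument slightly more careful than the paper's.
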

	
	\begin{proof}
By Theorem~\ref{teocomp2}, the graph $G$ has a finite dominating set. Observe that $G$ has finitely many connected components. If $G$ is finite, there is nothing to prove. Suppose, then, that $G$ is infinite. Consider the spanning tree with finitely many internal vertices provided by Theorem~\ref{domin}. Since the set of internal vertices is finite, this spanning tree cannot contain a ray. Therefore, there exists a rayless spanning tree in $G$.
	\end{proof}
	
	The Figure \ref{exm2} shows that the converse of Corollary \ref{rayless} also does not hold.
	
	\begin{figure}[H]
		\centering

		\tikzset{every picture/.style={line width=0.75pt}} %set default line width to 0.75pt        
		
		\begin{tikzpicture}[x=0.75pt,y=0.75pt,yscale=-1,xscale=1]
			%uncomment if require: \path (0,300); %set diagram left start at 0, and has height of 300
			
			%Straight Lines [id:da28712320570645666] 
			\draw    (100,124) -- (122,149.6) ;
			\draw [shift={(122,149.6)}, rotate = 49.33] [color={rgb, 255:red, 0; green, 0; blue, 0 }  ][fill={rgb, 255:red, 0; green, 0; blue, 0 }  ][line width=0.75]      (0, 0) circle [x radius= 3.35, y radius= 3.35]   ;
			\draw [shift={(100,124)}, rotate = 49.33] [color={rgb, 255:red, 0; green, 0; blue, 0 }  ][fill={rgb, 255:red, 0; green, 0; blue, 0 }  ][line width=0.75]      (0, 0) circle [x radius= 3.35, y radius= 3.35]   ;
			%Straight Lines [id:da6608538700381897] 
			\draw    (122,149.6) -- (147,133.6) ;
			\draw [shift={(147,133.6)}, rotate = 327.38] [color={rgb, 255:red, 0; green, 0; blue, 0 }  ][fill={rgb, 255:red, 0; green, 0; blue, 0 }  ][line width=0.75]      (0, 0) circle [x radius= 3.35, y radius= 3.35]   ;
			\draw [shift={(122,149.6)}, rotate = 327.38] [color={rgb, 255:red, 0; green, 0; blue, 0 }  ][fill={rgb, 255:red, 0; green, 0; blue, 0 }  ][line width=0.75]      (0, 0) circle [x radius= 3.35, y radius= 3.35]   ;
			%Straight Lines [id:da21985047582347195] 
			\draw    (124,118.6) -- (122,149.6) ;
			\draw [shift={(122,149.6)}, rotate = 93.69] [color={rgb, 255:red, 0; green, 0; blue, 0 }  ][fill={rgb, 255:red, 0; green, 0; blue, 0 }  ][line width=0.75]      (0, 0) circle [x radius= 3.35, y radius= 3.35]   ;
			\draw [shift={(124,118.6)}, rotate = 93.69] [color={rgb, 255:red, 0; green, 0; blue, 0 }  ][fill={rgb, 255:red, 0; green, 0; blue, 0 }  ][line width=0.75]      (0, 0) circle [x radius= 3.35, y radius= 3.35]   ;
			%Straight Lines [id:da2808019901025063] 
			\draw    (122,149.6) -- (153,159.6) ;
			\draw [shift={(153,159.6)}, rotate = 17.88] [color={rgb, 255:red, 0; green, 0; blue, 0 }  ][fill={rgb, 255:red, 0; green, 0; blue, 0 }  ][line width=0.75]      (0, 0) circle [x radius= 3.35, y radius= 3.35]   ;
			\draw [shift={(122,149.6)}, rotate = 17.88] [color={rgb, 255:red, 0; green, 0; blue, 0 }  ][fill={rgb, 255:red, 0; green, 0; blue, 0 }  ][line width=0.75]      (0, 0) circle [x radius= 3.35, y radius= 3.35]   ;
			%Straight Lines [id:da09985694316854943] 
			\draw    (122,149.6) -- (129,175.6) ;
			\draw [shift={(129,175.6)}, rotate = 74.93] [color={rgb, 255:red, 0; green, 0; blue, 0 }  ][fill={rgb, 255:red, 0; green, 0; blue, 0 }  ][line width=0.75]      (0, 0) circle [x radius= 3.35, y radius= 3.35]   ;
			\draw [shift={(122,149.6)}, rotate = 74.93] [color={rgb, 255:red, 0; green, 0; blue, 0 }  ][fill={rgb, 255:red, 0; green, 0; blue, 0 }  ][line width=0.75]      (0, 0) circle [x radius= 3.35, y radius= 3.35]   ;
			%Curve Lines [id:da9828608739574433] 
			\draw  [dash pattern={on 0.84pt off 2.51pt}]  (113,173.6) .. controls (101,171.6) and (84,138.6) .. (91,141.6) ;
			%Straight Lines [id:da29066061861168213] 
			\draw    (80,117.6) -- (100,124) ;
			\draw [shift={(100,124)}, rotate = 17.74] [color={rgb, 255:red, 0; green, 0; blue, 0 }  ][fill={rgb, 255:red, 0; green, 0; blue, 0 }  ][line width=0.75]      (0, 0) circle [x radius= 3.35, y radius= 3.35]   ;
			\draw [shift={(80,117.6)}, rotate = 17.74] [color={rgb, 255:red, 0; green, 0; blue, 0 }  ][fill={rgb, 255:red, 0; green, 0; blue, 0 }  ][line width=0.75]      (0, 0) circle [x radius= 3.35, y radius= 3.35]   ;
			%Straight Lines [id:da7745324347924946] 
			\draw    (101,105.6) -- (100,124) ;
			\draw [shift={(100,124)}, rotate = 93.11] [color={rgb, 255:red, 0; green, 0; blue, 0 }  ][fill={rgb, 255:red, 0; green, 0; blue, 0 }  ][line width=0.75]      (0, 0) circle [x radius= 3.35, y radius= 3.35]   ;
			\draw [shift={(101,105.6)}, rotate = 93.11] [color={rgb, 255:red, 0; green, 0; blue, 0 }  ][fill={rgb, 255:red, 0; green, 0; blue, 0 }  ][line width=0.75]      (0, 0) circle [x radius= 3.35, y radius= 3.35]   ;
			%Straight Lines [id:da5585970989664318] 
			\draw    (116,100.6) -- (124,118.6) ;
			\draw [shift={(124,118.6)}, rotate = 66.04] [color={rgb, 255:red, 0; green, 0; blue, 0 }  ][fill={rgb, 255:red, 0; green, 0; blue, 0 }  ][line width=0.75]      (0, 0) circle [x radius= 3.35, y radius= 3.35]   ;
			\draw [shift={(116,100.6)}, rotate = 66.04] [color={rgb, 255:red, 0; green, 0; blue, 0 }  ][fill={rgb, 255:red, 0; green, 0; blue, 0 }  ][line width=0.75]      (0, 0) circle [x radius= 3.35, y radius= 3.35]   ;
			%Straight Lines [id:da7156543824744762] 
			\draw    (124,118.6) -- (138,105.6) ;
			\draw [shift={(138,105.6)}, rotate = 317.12] [color={rgb, 255:red, 0; green, 0; blue, 0 }  ][fill={rgb, 255:red, 0; green, 0; blue, 0 }  ][line width=0.75]      (0, 0) circle [x radius= 3.35, y radius= 3.35]   ;
			\draw [shift={(124,118.6)}, rotate = 317.12] [color={rgb, 255:red, 0; green, 0; blue, 0 }  ][fill={rgb, 255:red, 0; green, 0; blue, 0 }  ][line width=0.75]      (0, 0) circle [x radius= 3.35, y radius= 3.35]   ;
			%Straight Lines [id:da657424544217783] 
			\draw    (147,133.6) -- (149,116.6) ;
			\draw [shift={(149,116.6)}, rotate = 276.71] [color={rgb, 255:red, 0; green, 0; blue, 0 }  ][fill={rgb, 255:red, 0; green, 0; blue, 0 }  ][line width=0.75]      (0, 0) circle [x radius= 3.35, y radius= 3.35]   ;
			\draw [shift={(147,133.6)}, rotate = 276.71] [color={rgb, 255:red, 0; green, 0; blue, 0 }  ][fill={rgb, 255:red, 0; green, 0; blue, 0 }  ][line width=0.75]      (0, 0) circle [x radius= 3.35, y radius= 3.35]   ;
			%Straight Lines [id:da09773393283229737] 
			\draw    (147,133.6) -- (164,131.6) ;
			\draw [shift={(164,131.6)}, rotate = 353.29] [color={rgb, 255:red, 0; green, 0; blue, 0 }  ][fill={rgb, 255:red, 0; green, 0; blue, 0 }  ][line width=0.75]      (0, 0) circle [x radius= 3.35, y radius= 3.35]   ;
			\draw [shift={(147,133.6)}, rotate = 353.29] [color={rgb, 255:red, 0; green, 0; blue, 0 }  ][fill={rgb, 255:red, 0; green, 0; blue, 0 }  ][line width=0.75]      (0, 0) circle [x radius= 3.35, y radius= 3.35]   ;
			%Straight Lines [id:da7110978062643845] 
			\draw    (153,159.6) -- (167,147.6) ;
			\draw [shift={(167,147.6)}, rotate = 319.4] [color={rgb, 255:red, 0; green, 0; blue, 0 }  ][fill={rgb, 255:red, 0; green, 0; blue, 0 }  ][line width=0.75]      (0, 0) circle [x radius= 3.35, y radius= 3.35]   ;
			\draw [shift={(153,159.6)}, rotate = 319.4] [color={rgb, 255:red, 0; green, 0; blue, 0 }  ][fill={rgb, 255:red, 0; green, 0; blue, 0 }  ][line width=0.75]      (0, 0) circle [x radius= 3.35, y radius= 3.35]   ;
			%Straight Lines [id:da9658561246584876] 
			\draw    (153,159.6) -- (167,169.6) ;
			\draw [shift={(167,169.6)}, rotate = 35.54] [color={rgb, 255:red, 0; green, 0; blue, 0 }  ][fill={rgb, 255:red, 0; green, 0; blue, 0 }  ][line width=0.75]      (0, 0) circle [x radius= 3.35, y radius= 3.35]   ;
			\draw [shift={(153,159.6)}, rotate = 35.54] [color={rgb, 255:red, 0; green, 0; blue, 0 }  ][fill={rgb, 255:red, 0; green, 0; blue, 0 }  ][line width=0.75]      (0, 0) circle [x radius= 3.35, y radius= 3.35]   ;
			%Straight Lines [id:da6473286824512343] 
			\draw    (129,175.6) -- (144,188.6) ;
			\draw [shift={(144,188.6)}, rotate = 40.91] [color={rgb, 255:red, 0; green, 0; blue, 0 }  ][fill={rgb, 255:red, 0; green, 0; blue, 0 }  ][line width=0.75]      (0, 0) circle [x radius= 3.35, y radius= 3.35]   ;
			\draw [shift={(129,175.6)}, rotate = 40.91] [color={rgb, 255:red, 0; green, 0; blue, 0 }  ][fill={rgb, 255:red, 0; green, 0; blue, 0 }  ][line width=0.75]      (0, 0) circle [x radius= 3.35, y radius= 3.35]   ;
			%Straight Lines [id:da14061279927763293] 
			\draw    (129,175.6) -- (123,190.6) ;
			\draw [shift={(123,190.6)}, rotate = 111.8] [color={rgb, 255:red, 0; green, 0; blue, 0 }  ][fill={rgb, 255:red, 0; green, 0; blue, 0 }  ][line width=0.75]      (0, 0) circle [x radius= 3.35, y radius= 3.35]   ;
			\draw [shift={(129,175.6)}, rotate = 111.8] [color={rgb, 255:red, 0; green, 0; blue, 0 }  ][fill={rgb, 255:red, 0; green, 0; blue, 0 }  ][line width=0.75]      (0, 0) circle [x radius= 3.35, y radius= 3.35]   ;

		\end{tikzpicture}
		\caption{A rayless tree which is not compact.}
		\label{exm2}
	\end{figure}
	
		 The Corollary~\ref{rayless}, for connected countable graphs, can also be obtained as a consequence of a result due to \cite{kurkofka}, which states that a countable connected graph admits a spanning tree without rays if and only if every ray is dominated. Here, a ray is said to be \emph{dominated} if there exists a vertex $v$ such that infinitely many pairwise vertex-disjoint paths join $v$ to vertices of the ray. In particular, this applies whenever the graph has a finite dominating set, since in that case every ray is dominated.

\section{Remarks and future works}

So far, we have only considered convergence on the vertex set of a graph. However, many interesting combinatorial properties concern edges, so it is also important to define convergence for edges. A natural approach to this is to use the line graph. The \textit{line graph} of $G$, denoted by $L(G)$, is the graph whose vertex set is $E(G)$, where two vertices are adjacent if and only if the corresponding edges share an endpoint in $G$.
The concept of thorn graphs was introduced by Gutman and has since been studied in various applications by several authors (see~\cite{BeinekeBagga2021, boska2025edgedirectioncompactedgeendspaces,Chudnovsky2006}). Could this type of convergence provide a way to characterize edge-connectivity or to approach edge decompositions via convergence systems?

A next step would be to consider a convergence on the end space of a graph $G$ that is not the usual one but still retains some known properties. Recall that if $X \subseteq V(G)$ is finite and $\omega \in \Omega(G)$, there is a unique component of $G - X$ that contains a tail of every ray in $\omega$; we denote this component by $C(X,\omega)$. If $C$ is any component of $G - X$, we write $\Omega(X,C)$ for the set of ends $\omega'$ of $G$ such that $C(X,\omega') = C$. The collection of all sets $\Omega(X,\omega)$, with $X \subseteq V(G)$ finite and $\omega$ an arbitrary end, forms a basis for the usual topology on $\Omega(G)$.

In this topology, a net $\varphi \in \textsc{Nets}(\Omega(G))$ converges to an end $\omega$ if for every finite set $F \subseteq V(G)$ there exists $d \in \mathrm{dom}(\varphi)$ such that $\varphi[d^\uparrow] \subseteq \Omega(F,\omega)$.

We now define a new convergence on $\Omega(G)$, denoted by $\to_\Omega$. For an end $\omega \in \Omega(G)$ and $k \in \mathbb{N}$, let $\Omega(k,\omega)$ denote the set of ends that admit a representative for which there are at least $k$ vertex-disjoint paths to some representative of $\omega$.

We say that a net $\varphi \in \textsc{Nets}(\Omega(G))$ converges to $\omega \in \Omega(G)$ if for every $k \in \mathbb{N}$ there exists $d_k \in \mathrm{dom}(\varphi)$ such that $\varphi[d_k^\uparrow] \subseteq \Omega(k,\omega)$. In other words, a net of ends converges if, from some point onward, it is possible to establish an arbitrarily large number of vertex-disjoint connections to $\omega$.

Notice that this convergence is strictly stronger than the usual one (see Figure~\ref{exm4}). It also defines a topology, which remains zero-dimensional and, unlike the usual topology, is first countable. In this setting, local finiteness of the graph no longer implies compactness, as it does in the usual topology. This raises a number of questions: Which known results from the literature still hold under this new convergence, and what new results can be obtained? Moreover, can existing results in the literature be proved more simply using nets?

\begin{figure}[H]
	\centering

	\tikzset{every picture/.style={line width=0.75pt}} %set default line width to 0.75pt        
	
	\begin{tikzpicture}[x=0.75pt,y=0.75pt,yscale=-1,xscale=1]
		%uncomment if require: \path (0,300); %set diagram left start at 0, and has height of 300
		
		%Straight Lines [id:da39511532011298567] 
		\draw    (100,191.2) -- (120,191.2) ;
		\draw [shift={(120,191.2)}, rotate = 0] [color={rgb, 255:red, 0; green, 0; blue, 0 }  ][fill={rgb, 255:red, 0; green, 0; blue, 0 }  ][line width=0.75]      (0, 0) circle [x radius= 3.35, y radius= 3.35]   ;
		\draw [shift={(100,191.2)}, rotate = 0] [color={rgb, 255:red, 0; green, 0; blue, 0 }  ][fill={rgb, 255:red, 0; green, 0; blue, 0 }  ][line width=0.75]      (0, 0) circle [x radius= 3.35, y radius= 3.35]   ;
		%Straight Lines [id:da0069841617693979785] 
		\draw    (120,191.2) -- (140,191.2) ;
		\draw [shift={(140,191.2)}, rotate = 0] [color={rgb, 255:red, 0; green, 0; blue, 0 }  ][fill={rgb, 255:red, 0; green, 0; blue, 0 }  ][line width=0.75]      (0, 0) circle [x radius= 3.35, y radius= 3.35]   ;
		\draw [shift={(120,191.2)}, rotate = 0] [color={rgb, 255:red, 0; green, 0; blue, 0 }  ][fill={rgb, 255:red, 0; green, 0; blue, 0 }  ][line width=0.75]      (0, 0) circle [x radius= 3.35, y radius= 3.35]   ;
		%Straight Lines [id:da31198809444084485] 
		\draw    (140,191.2) -- (160,191.2) ;
		\draw [shift={(160,191.2)}, rotate = 0] [color={rgb, 255:red, 0; green, 0; blue, 0 }  ][fill={rgb, 255:red, 0; green, 0; blue, 0 }  ][line width=0.75]      (0, 0) circle [x radius= 3.35, y radius= 3.35]   ;
		\draw [shift={(140,191.2)}, rotate = 0] [color={rgb, 255:red, 0; green, 0; blue, 0 }  ][fill={rgb, 255:red, 0; green, 0; blue, 0 }  ][line width=0.75]      (0, 0) circle [x radius= 3.35, y radius= 3.35]   ;
		%Straight Lines [id:da8044904906822384] 
		\draw    (160,191.2) -- (180,191.2) ;
		\draw [shift={(180,191.2)}, rotate = 0] [color={rgb, 255:red, 0; green, 0; blue, 0 }  ][fill={rgb, 255:red, 0; green, 0; blue, 0 }  ][line width=0.75]      (0, 0) circle [x radius= 3.35, y radius= 3.35]   ;
		\draw [shift={(160,191.2)}, rotate = 0] [color={rgb, 255:red, 0; green, 0; blue, 0 }  ][fill={rgb, 255:red, 0; green, 0; blue, 0 }  ][line width=0.75]      (0, 0) circle [x radius= 3.35, y radius= 3.35]   ;
		%Straight Lines [id:da9407148020094052] 
		\draw    (180,191.2) -- (200,191.2) ;
		\draw [shift={(200,191.2)}, rotate = 0] [color={rgb, 255:red, 0; green, 0; blue, 0 }  ][fill={rgb, 255:red, 0; green, 0; blue, 0 }  ][line width=0.75]      (0, 0) circle [x radius= 3.35, y radius= 3.35]   ;
		\draw [shift={(180,191.2)}, rotate = 0] [color={rgb, 255:red, 0; green, 0; blue, 0 }  ][fill={rgb, 255:red, 0; green, 0; blue, 0 }  ][line width=0.75]      (0, 0) circle [x radius= 3.35, y radius= 3.35]   ;
		%Straight Lines [id:da1089727138408908] 
		\draw    (100,191.2) -- (100,171.2) ;
		\draw [shift={(100,171.2)}, rotate = 270] [color={rgb, 255:red, 0; green, 0; blue, 0 }  ][fill={rgb, 255:red, 0; green, 0; blue, 0 }  ][line width=0.75]      (0, 0) circle [x radius= 3.35, y radius= 3.35]   ;
		\draw [shift={(100,191.2)}, rotate = 270] [color={rgb, 255:red, 0; green, 0; blue, 0 }  ][fill={rgb, 255:red, 0; green, 0; blue, 0 }  ][line width=0.75]      (0, 0) circle [x radius= 3.35, y radius= 3.35]   ;
		%Straight Lines [id:da6034084999095135] 
		\draw    (100,171.2) -- (100,151.2) ;
		\draw [shift={(100,151.2)}, rotate = 270] [color={rgb, 255:red, 0; green, 0; blue, 0 }  ][fill={rgb, 255:red, 0; green, 0; blue, 0 }  ][line width=0.75]      (0, 0) circle [x radius= 3.35, y radius= 3.35]   ;
		\draw [shift={(100,171.2)}, rotate = 270] [color={rgb, 255:red, 0; green, 0; blue, 0 }  ][fill={rgb, 255:red, 0; green, 0; blue, 0 }  ][line width=0.75]      (0, 0) circle [x radius= 3.35, y radius= 3.35]   ;
		%Straight Lines [id:da9098690975768206] 
		\draw    (100,151.2) -- (100,131.2) ;
		\draw [shift={(100,131.2)}, rotate = 270] [color={rgb, 255:red, 0; green, 0; blue, 0 }  ][fill={rgb, 255:red, 0; green, 0; blue, 0 }  ][line width=0.75]      (0, 0) circle [x radius= 3.35, y radius= 3.35]   ;
		\draw [shift={(100,151.2)}, rotate = 270] [color={rgb, 255:red, 0; green, 0; blue, 0 }  ][fill={rgb, 255:red, 0; green, 0; blue, 0 }  ][line width=0.75]      (0, 0) circle [x radius= 3.35, y radius= 3.35]   ;
		%Straight Lines [id:da4195064029553307] 
		\draw    (100,131.2) -- (100,111.2) ;
		\draw [shift={(100,111.2)}, rotate = 270] [color={rgb, 255:red, 0; green, 0; blue, 0 }  ][fill={rgb, 255:red, 0; green, 0; blue, 0 }  ][line width=0.75]      (0, 0) circle [x radius= 3.35, y radius= 3.35]   ;
		\draw [shift={(100,131.2)}, rotate = 270] [color={rgb, 255:red, 0; green, 0; blue, 0 }  ][fill={rgb, 255:red, 0; green, 0; blue, 0 }  ][line width=0.75]      (0, 0) circle [x radius= 3.35, y radius= 3.35]   ;
		%Straight Lines [id:da45259078982258205] 
		\draw    (100,111.2) -- (100,91.2) ;
		\draw [shift={(100,91.2)}, rotate = 270] [color={rgb, 255:red, 0; green, 0; blue, 0 }  ][fill={rgb, 255:red, 0; green, 0; blue, 0 }  ][line width=0.75]      (0, 0) circle [x radius= 3.35, y radius= 3.35]   ;
		\draw [shift={(100,111.2)}, rotate = 270] [color={rgb, 255:red, 0; green, 0; blue, 0 }  ][fill={rgb, 255:red, 0; green, 0; blue, 0 }  ][line width=0.75]      (0, 0) circle [x radius= 3.35, y radius= 3.35]   ;
		%Straight Lines [id:da3828359461925879] 
		\draw    (120,191.2) -- (120,171.2) ;
		\draw [shift={(120,171.2)}, rotate = 270] [color={rgb, 255:red, 0; green, 0; blue, 0 }  ][fill={rgb, 255:red, 0; green, 0; blue, 0 }  ][line width=0.75]      (0, 0) circle [x radius= 3.35, y radius= 3.35]   ;
		\draw [shift={(120,191.2)}, rotate = 270] [color={rgb, 255:red, 0; green, 0; blue, 0 }  ][fill={rgb, 255:red, 0; green, 0; blue, 0 }  ][line width=0.75]      (0, 0) circle [x radius= 3.35, y radius= 3.35]   ;
		%Straight Lines [id:da8023727062120607] 
		\draw    (120,171.2) -- (120,151.2) ;
		\draw [shift={(120,151.2)}, rotate = 270] [color={rgb, 255:red, 0; green, 0; blue, 0 }  ][fill={rgb, 255:red, 0; green, 0; blue, 0 }  ][line width=0.75]      (0, 0) circle [x radius= 3.35, y radius= 3.35]   ;
		\draw [shift={(120,171.2)}, rotate = 270] [color={rgb, 255:red, 0; green, 0; blue, 0 }  ][fill={rgb, 255:red, 0; green, 0; blue, 0 }  ][line width=0.75]      (0, 0) circle [x radius= 3.35, y radius= 3.35]   ;
		%Straight Lines [id:da4724969251635206] 
		\draw    (120,151.2) -- (120,131.2) ;
		\draw [shift={(120,131.2)}, rotate = 270] [color={rgb, 255:red, 0; green, 0; blue, 0 }  ][fill={rgb, 255:red, 0; green, 0; blue, 0 }  ][line width=0.75]      (0, 0) circle [x radius= 3.35, y radius= 3.35]   ;
		\draw [shift={(120,151.2)}, rotate = 270] [color={rgb, 255:red, 0; green, 0; blue, 0 }  ][fill={rgb, 255:red, 0; green, 0; blue, 0 }  ][line width=0.75]      (0, 0) circle [x radius= 3.35, y radius= 3.35]   ;
		%Straight Lines [id:da051761889932855465] 
		\draw    (120,131.2) -- (120,111.2) ;
		\draw [shift={(120,111.2)}, rotate = 270] [color={rgb, 255:red, 0; green, 0; blue, 0 }  ][fill={rgb, 255:red, 0; green, 0; blue, 0 }  ][line width=0.75]      (0, 0) circle [x radius= 3.35, y radius= 3.35]   ;
		\draw [shift={(120,131.2)}, rotate = 270] [color={rgb, 255:red, 0; green, 0; blue, 0 }  ][fill={rgb, 255:red, 0; green, 0; blue, 0 }  ][line width=0.75]      (0, 0) circle [x radius= 3.35, y radius= 3.35]   ;
		%Straight Lines [id:da7894058696065969] 
		\draw    (120,111.2) -- (120,91.2) ;
		\draw [shift={(120,91.2)}, rotate = 270] [color={rgb, 255:red, 0; green, 0; blue, 0 }  ][fill={rgb, 255:red, 0; green, 0; blue, 0 }  ][line width=0.75]      (0, 0) circle [x radius= 3.35, y radius= 3.35]   ;
		\draw [shift={(120,111.2)}, rotate = 270] [color={rgb, 255:red, 0; green, 0; blue, 0 }  ][fill={rgb, 255:red, 0; green, 0; blue, 0 }  ][line width=0.75]      (0, 0) circle [x radius= 3.35, y radius= 3.35]   ;
		%Straight Lines [id:da9120200029779514] 
		\draw    (140,191.2) -- (140,171.2) ;
		\draw [shift={(140,171.2)}, rotate = 270] [color={rgb, 255:red, 0; green, 0; blue, 0 }  ][fill={rgb, 255:red, 0; green, 0; blue, 0 }  ][line width=0.75]      (0, 0) circle [x radius= 3.35, y radius= 3.35]   ;
		\draw [shift={(140,191.2)}, rotate = 270] [color={rgb, 255:red, 0; green, 0; blue, 0 }  ][fill={rgb, 255:red, 0; green, 0; blue, 0 }  ][line width=0.75]      (0, 0) circle [x radius= 3.35, y radius= 3.35]   ;
		%Straight Lines [id:da9310910590012745] 
		\draw    (140,171.2) -- (140,151.2) ;
		\draw [shift={(140,151.2)}, rotate = 270] [color={rgb, 255:red, 0; green, 0; blue, 0 }  ][fill={rgb, 255:red, 0; green, 0; blue, 0 }  ][line width=0.75]      (0, 0) circle [x radius= 3.35, y radius= 3.35]   ;
		\draw [shift={(140,171.2)}, rotate = 270] [color={rgb, 255:red, 0; green, 0; blue, 0 }  ][fill={rgb, 255:red, 0; green, 0; blue, 0 }  ][line width=0.75]      (0, 0) circle [x radius= 3.35, y radius= 3.35]   ;
		%Straight Lines [id:da4522164286755246] 
		\draw    (140,151.2) -- (140,131.2) ;
		\draw [shift={(140,131.2)}, rotate = 270] [color={rgb, 255:red, 0; green, 0; blue, 0 }  ][fill={rgb, 255:red, 0; green, 0; blue, 0 }  ][line width=0.75]      (0, 0) circle [x radius= 3.35, y radius= 3.35]   ;
		\draw [shift={(140,151.2)}, rotate = 270] [color={rgb, 255:red, 0; green, 0; blue, 0 }  ][fill={rgb, 255:red, 0; green, 0; blue, 0 }  ][line width=0.75]      (0, 0) circle [x radius= 3.35, y radius= 3.35]   ;
		%Straight Lines [id:da9704320695399669] 
		\draw    (140,131.2) -- (140,111.2) ;
		\draw [shift={(140,111.2)}, rotate = 270] [color={rgb, 255:red, 0; green, 0; blue, 0 }  ][fill={rgb, 255:red, 0; green, 0; blue, 0 }  ][line width=0.75]      (0, 0) circle [x radius= 3.35, y radius= 3.35]   ;
		\draw [shift={(140,131.2)}, rotate = 270] [color={rgb, 255:red, 0; green, 0; blue, 0 }  ][fill={rgb, 255:red, 0; green, 0; blue, 0 }  ][line width=0.75]      (0, 0) circle [x radius= 3.35, y radius= 3.35]   ;
		%Straight Lines [id:da7438515396017416] 
		\draw    (140,111.2) -- (140,91.2) ;
		\draw [shift={(140,91.2)}, rotate = 270] [color={rgb, 255:red, 0; green, 0; blue, 0 }  ][fill={rgb, 255:red, 0; green, 0; blue, 0 }  ][line width=0.75]      (0, 0) circle [x radius= 3.35, y radius= 3.35]   ;
		\draw [shift={(140,111.2)}, rotate = 270] [color={rgb, 255:red, 0; green, 0; blue, 0 }  ][fill={rgb, 255:red, 0; green, 0; blue, 0 }  ][line width=0.75]      (0, 0) circle [x radius= 3.35, y radius= 3.35]   ;
		%Straight Lines [id:da13792547736914162] 
		\draw    (160,191.2) -- (160,171.2) ;
		\draw [shift={(160,171.2)}, rotate = 270] [color={rgb, 255:red, 0; green, 0; blue, 0 }  ][fill={rgb, 255:red, 0; green, 0; blue, 0 }  ][line width=0.75]      (0, 0) circle [x radius= 3.35, y radius= 3.35]   ;
		\draw [shift={(160,191.2)}, rotate = 270] [color={rgb, 255:red, 0; green, 0; blue, 0 }  ][fill={rgb, 255:red, 0; green, 0; blue, 0 }  ][line width=0.75]      (0, 0) circle [x radius= 3.35, y radius= 3.35]   ;
		%Straight Lines [id:da6389453976836088] 
		\draw    (180,191.2) -- (180,171.2) ;
		\draw [shift={(180,171.2)}, rotate = 270] [color={rgb, 255:red, 0; green, 0; blue, 0 }  ][fill={rgb, 255:red, 0; green, 0; blue, 0 }  ][line width=0.75]      (0, 0) circle [x radius= 3.35, y radius= 3.35]   ;
		\draw [shift={(180,191.2)}, rotate = 270] [color={rgb, 255:red, 0; green, 0; blue, 0 }  ][fill={rgb, 255:red, 0; green, 0; blue, 0 }  ][line width=0.75]      (0, 0) circle [x radius= 3.35, y radius= 3.35]   ;
		%Straight Lines [id:da36037223422021447] 
		\draw    (160,171.2) -- (160,151.2) ;
		\draw [shift={(160,151.2)}, rotate = 270] [color={rgb, 255:red, 0; green, 0; blue, 0 }  ][fill={rgb, 255:red, 0; green, 0; blue, 0 }  ][line width=0.75]      (0, 0) circle [x radius= 3.35, y radius= 3.35]   ;
		\draw [shift={(160,171.2)}, rotate = 270] [color={rgb, 255:red, 0; green, 0; blue, 0 }  ][fill={rgb, 255:red, 0; green, 0; blue, 0 }  ][line width=0.75]      (0, 0) circle [x radius= 3.35, y radius= 3.35]   ;
		%Straight Lines [id:da09921704130766917] 
		\draw    (160,151.2) -- (160,131.2) ;
		\draw [shift={(160,131.2)}, rotate = 270] [color={rgb, 255:red, 0; green, 0; blue, 0 }  ][fill={rgb, 255:red, 0; green, 0; blue, 0 }  ][line width=0.75]      (0, 0) circle [x radius= 3.35, y radius= 3.35]   ;
		\draw [shift={(160,151.2)}, rotate = 270] [color={rgb, 255:red, 0; green, 0; blue, 0 }  ][fill={rgb, 255:red, 0; green, 0; blue, 0 }  ][line width=0.75]      (0, 0) circle [x radius= 3.35, y radius= 3.35]   ;
		%Straight Lines [id:da16307719070778603] 
		\draw    (160,131.2) -- (160,111.2) ;
		\draw [shift={(160,111.2)}, rotate = 270] [color={rgb, 255:red, 0; green, 0; blue, 0 }  ][fill={rgb, 255:red, 0; green, 0; blue, 0 }  ][line width=0.75]      (0, 0) circle [x radius= 3.35, y radius= 3.35]   ;
		\draw [shift={(160,131.2)}, rotate = 270] [color={rgb, 255:red, 0; green, 0; blue, 0 }  ][fill={rgb, 255:red, 0; green, 0; blue, 0 }  ][line width=0.75]      (0, 0) circle [x radius= 3.35, y radius= 3.35]   ;
		%Straight Lines [id:da5708881049871777] 
		\draw    (160,111.2) -- (160,91.2) ;
		\draw [shift={(160,91.2)}, rotate = 270] [color={rgb, 255:red, 0; green, 0; blue, 0 }  ][fill={rgb, 255:red, 0; green, 0; blue, 0 }  ][line width=0.75]      (0, 0) circle [x radius= 3.35, y radius= 3.35]   ;
		\draw [shift={(160,111.2)}, rotate = 270] [color={rgb, 255:red, 0; green, 0; blue, 0 }  ][fill={rgb, 255:red, 0; green, 0; blue, 0 }  ][line width=0.75]      (0, 0) circle [x radius= 3.35, y radius= 3.35]   ;
		%Straight Lines [id:da3104093593282823] 
		\draw    (180,171.2) -- (180,151.2) ;
		\draw [shift={(180,151.2)}, rotate = 270] [color={rgb, 255:red, 0; green, 0; blue, 0 }  ][fill={rgb, 255:red, 0; green, 0; blue, 0 }  ][line width=0.75]      (0, 0) circle [x radius= 3.35, y radius= 3.35]   ;
		\draw [shift={(180,171.2)}, rotate = 270] [color={rgb, 255:red, 0; green, 0; blue, 0 }  ][fill={rgb, 255:red, 0; green, 0; blue, 0 }  ][line width=0.75]      (0, 0) circle [x radius= 3.35, y radius= 3.35]   ;
		%Straight Lines [id:da7855247737887984] 
		\draw    (180,151.2) -- (180,131.2) ;
		\draw [shift={(180,131.2)}, rotate = 270] [color={rgb, 255:red, 0; green, 0; blue, 0 }  ][fill={rgb, 255:red, 0; green, 0; blue, 0 }  ][line width=0.75]      (0, 0) circle [x radius= 3.35, y radius= 3.35]   ;
		\draw [shift={(180,151.2)}, rotate = 270] [color={rgb, 255:red, 0; green, 0; blue, 0 }  ][fill={rgb, 255:red, 0; green, 0; blue, 0 }  ][line width=0.75]      (0, 0) circle [x radius= 3.35, y radius= 3.35]   ;
		%Straight Lines [id:da7700935680207882] 
		\draw    (180,131.2) -- (180,111.2) ;
		\draw [shift={(180,111.2)}, rotate = 270] [color={rgb, 255:red, 0; green, 0; blue, 0 }  ][fill={rgb, 255:red, 0; green, 0; blue, 0 }  ][line width=0.75]      (0, 0) circle [x radius= 3.35, y radius= 3.35]   ;
		\draw [shift={(180,131.2)}, rotate = 270] [color={rgb, 255:red, 0; green, 0; blue, 0 }  ][fill={rgb, 255:red, 0; green, 0; blue, 0 }  ][line width=0.75]      (0, 0) circle [x radius= 3.35, y radius= 3.35]   ;
		%Straight Lines [id:da3430614645930343] 
		\draw    (180,111.2) -- (180,91.2) ;
		\draw [shift={(180,91.2)}, rotate = 270] [color={rgb, 255:red, 0; green, 0; blue, 0 }  ][fill={rgb, 255:red, 0; green, 0; blue, 0 }  ][line width=0.75]      (0, 0) circle [x radius= 3.35, y radius= 3.35]   ;
		\draw [shift={(180,111.2)}, rotate = 270] [color={rgb, 255:red, 0; green, 0; blue, 0 }  ][fill={rgb, 255:red, 0; green, 0; blue, 0 }  ][line width=0.75]      (0, 0) circle [x radius= 3.35, y radius= 3.35]   ;
		%Straight Lines [id:da7665640136413825] 
		\draw  [dash pattern={on 0.84pt off 2.51pt}]  (100,69.6) -- (100,82.6) ;
		%Straight Lines [id:da5481952726517153] 
		\draw  [dash pattern={on 0.84pt off 2.51pt}]  (120,70.6) -- (120,83.6) ;
		%Straight Lines [id:da5555269626320365] 
		\draw  [dash pattern={on 0.84pt off 2.51pt}]  (139,70.6) -- (139,83.6) ;
		%Straight Lines [id:da9712948419507028] 
		\draw  [dash pattern={on 0.84pt off 2.51pt}]  (160,71.6) -- (160,84.6) ;
		%Straight Lines [id:da26634828182287185] 
		\draw  [dash pattern={on 0.84pt off 2.51pt}]  (181,72.6) -- (181,85.6) ;
		%Straight Lines [id:da132576872473393] 
		\draw  [dash pattern={on 0.84pt off 2.51pt}]  (220,191.6) -- (210,191.6) ;
		%Straight Lines [id:da15456988440456532] 
		\draw  [dash pattern={on 0.84pt off 2.51pt}]  (210,142.6) -- (200,142.6) ;

	\end{tikzpicture}
	\caption{The sequence of vertical rays converges to the horizontal ray with respect to the usual topology on the end space, but not with respect to $\to_{\Omega}$.}
	\label{exm4}
\end{figure}

\section*{Acknowledgments}
The third named author thanks the support of Coordenação de Aperfeiçoamento de Pessoal de Nível Superior (CAPES), being sponsored through grant number 88887.136056/2025-00. We thank Guilherme Eduardo Pinto for the nice discussions and helpful suggestions.  Finally,
we thank the referee of an earlier submission for their valuable comments, which
helped improve the exposition of this work.

\bibliographystyle{amsplain}

\bibliography{refs}

\end{document}